\def\ps@pprintTitle{%
 \let\@oddhead\@empty
 \let\@evenhead\@empty
 \def\@oddfoot{\centerline{\thepage}}%
 \let\@evenfoot\@oddfoot}
\newtheorem{theorem}{Theorem}
\newtheorem{proposition}{Proposition}
\newtheorem{corollary}{Corollary}
\newtheorem{lemma}{Lemma}
\newtheorem{remark}{Remark}
\newtheorem{problem}{Problem}
\theoremstyle{remark}
\newtheorem*{example}{Example}
\newtheorem*{acknowledgements}{Acknowledgements}
\newcommand{\bolda}{\mathbf{a}}
\newcommand{\boldc}{\mathbf{c}}
\newcommand{\bolde}{\mathbf{e}}
\newcommand{\boldf}{\mathbf{f}}
\newcommand{\boldh}{\mathbf{h}}
\newcommand{\boldv}{\mathbf{v}}
\newcommand{\boldx}{\mathbf{x}}
\newcommand{\boldy}{\mathbf{y}}
\newcommand{\boldzero}{\mathbf{0}}
\newcommand{\boldalpha}{\boldsymbol{\alpha}}
\newcommand{\boldomega}{\boldsymbol{\omega}}
\newcommand{\imag}{\mathbf{i}}
\newcommand{\C}{\mathbb{C}}
\newcommand{\R}{\mathbb{R}}
\newcommand{\Z}{\mathbb{Z}}
\DeclareMathOperator{\conv}{conv}
\DeclareMathOperator{\init}{init}
\DeclareMathOperator{\newt}{Newt}
\newcommand{\inner}[2]{\langle \,#1\,,\, #2\, \rangle}
\begin{document}

\begin{frontmatter}

%% Title, authors and addresses

\title{On the root count of algebraic Kuramoto equations in cycle networks with uniform coupling}
\author{Tianran Chen}
\ead{ti@nranchen.org}
\author{Evgeniia Korchevskaia}
\ead{ekorchev@aum.edu}
\address{Department of Mathematics, Auburn University Montgomery, Montgomery, Alabama}
\fntext[fn1]{
    TC and EK are supported in part by NSF research grant DMS-1923099 
    and a grant from the Auburn University at Montgomery (AUM) Grant-in-Aid Program.
    EK is also supported by the Undergraduate Research fund of AUM Department of Mathematics.}

%% use the tnoteref command within \title for footnotes;
%% use the tnotetext command for the associated footnote;
%% use the fnref command within \author or \address for footnotes;
%% use the fntext command for the associated footnote;
%% use the corref command within \author for corresponding author footnotes;
%% use the cortext command for the associated footnote;
%% use the ead command for the email address,
%% and the form \ead[url] for the home page:
%%
%% \title{Title\tnoteref{label1}}
%% \tnotetext[label1]{}
%% \author{Name\corref{cor1}\fnref{label2}}
%% \ead{email address}
%% \ead[url]{home page}
%% \fntext[label2]{}
%% \cortext[cor1]{}
%% \address{Address\fnref{label3}}
%% \fntext[label3]{}

%% use optional labels to link authors explicitly to addresses:
%% \author[label1,label2]{<author name>}
%% \address[label1]{<address>}
%% \address[label2]{<address>}

% \author{John Smith}
% \address{California, United States}

\begin{abstract}
    The Kuramoto model is a classical model used in the study of
    spontaneous synchronizations in networks of coupled oscillators.
    In this model, frequency synchronization configurations can be formulated
    as complex solutions to a system of algebraic equations.
    Recently, upper bounds to the number of frequency synchronization configurations 
    in cycle networks of $N$ oscillators  were calculated 
    under the assumption of generic non-uniform coupling.
    In this paper, we refine these results for the special cases of uniform coupling.
    In particular, we show that when, and only when, $N$ is divisible by 4,
    the upper bound for the number of synchronization configurations
    in the uniform coupling cases is significantly less than the bound in the non-uniform coupling cases.
    This result also establishes an explicit formula for the gap between
    the birationally invariant intersection index and the 
    Bernshtein-Kushnirenko-Khovanskii bound
    for the underlying algebraic equations.
\end{abstract}

% \begin{keyword}
% Science \sep Publication \sep Complicated
% %% keywords here, in the form: keyword \sep keyword
% %% MSC codes here, in the form: \MSC code \sep code
% %% or \MSC[2008] code \sep code (2000 is the default)
% \end{keyword}

\end{frontmatter}

%\linenumbers

%==============================================================
\section{Introduction}

The spontaneous synchronization of oscillators is a ubiquitous phenomenon
that appear naturally in many seemingly independent biological, mechanical, and electrical systems.
One of the classical models in the study of synchronization is the Kuramoto model \cite{Kuramoto1975Self}, 
which describes the dynamics of a network of coupled oscillators. 
The Kuramoto model has been extensively studied in the recent decades
and remains an enduring subject for the modeling of synchronization phenomena 
arising from the areas of science and engineering. 
Originally, the Kuramoto model had been applied to infinite complete networks (with all-to-all coupling). 
In order to adapt the model to complex topologies, 
numerous reformulations of the Kuramoto model have been introduced and studied, 
both analytically and numerically. 
See, e.g., review articles \cite{ARENAS200893, dorfler_synchronization_2014, RODRIGUES20161}. 
One of such generalizations of the Kuramoto model investigates synchronizations within cycle networks
(i.e., ring-like networks)
\cite{DelabaysColettaJacquod2016Multistability, Denes2019Pattern, Ha2012Basin, ManikTimmeWitthaut2017Cycle, Rogge2004Stability, Roy2012Synchronized, Xi2017Synchronization}. 
From an algebraic view point, 
the upper bound on the number of frequency synchronization configurations
(including complex configurations)
that can exist in a cycle network of $N$ oscillators with generic non-uniform coupling
is shown to be $N \binom{N-1}{ \lfloor (N-1)/2 \rfloor }$~\cite{ChenDavisMehta2018Counting,dal2019faces}.
% Motivated by these recent developments, we present our study of the Kuramoto model in the context of finite cycle networks, with particular results for the  networks with uniform coupling.

The main contribution of this paper is a significant refinement 
of this upper bound for the case of cycle networks with \emph{uniform coupling}.
%which is equivalent to the birationally invariant intersection index of a family of spaces of rational functions in the toric variety $(\C^*)^n$.
% The main contribution of this paper is
% a significant refinement of this upper bound
% in a cycle network of $N$ oscillators with \emph{uniform} couplings.
We show that if $N$ is not divisible by 4,
despite being a very special case, 
the generic number of complex synchronization configurations
is the same as in the cases with generic non-uniform coupling.
On the other hand, if $N$ is divisible by 4,
then the generic synchronization configuration count is significantly lower 
than the count in the case of non-uniform coupling.
This result also quantifies the gap between the 
birationally invariant intersection index of a family of rational functions 
over the toric variety $(\C^*)^n$
and the Bernshtein-Kushnirenko-Khovanskii bound \cite{Bernshtein1975Number, Khovanskii1978Newton, Kushnirenko1976Polyedres}
of a generic algebraic system in this family.

% The secondary contribution is a homotopy continuation algorithm
% for locating all complex synchronization configurations
% whose complexity is linear in the actual bound on the synchronization configuration
% rather than the much higher Bernshtein-Kushnirenko-Khovanskii bound \cite{Bernshtein1975Number, Khovanskii1978Newton, Kushnirenko1976Polyedres}.

This paper is organized as follows. 
In \Cref{sec:Review and problem statement}, 
we review the Kuramoto equations and their algebraic formulation  
and state the root counting problems which this paper focuses on. 
\Cref{sec: Adjacency polytope} describes the construction of the adjacency polytope and 
explores its geometric properties which are central to our main arguments. 
Then in \Cref{sec: counting roots}, we establish the main result which is the 
generic root count for Kuramoto equations arising from cycle networks with uniform coupling. 
% In \Cref{sec: homotopy}, we outline the developed homotopy continuation algorithm. 
The conclusion follows in \Cref{sec: conclusion}.

%==============================================================
\begin{acknowledgements}

This project grew out of authors' discussion with
Anton Leykin and Josephine Yu in 2017 and with Anders Jensen and Yue Ren in 2018
on closely related problems.
The authors also thank Rob Davis for kindly sharing his insights into
the geometric structures of adjacency polytopes.
\end{acknowledgements}

%==============================================================
\section{Preliminaries and problem statements}
\label{sec:Review and problem statement}
\subsection{Kuramoto model with uniform coupling and synchronization equations}
\label{sec: Kuramoto model}
\begin{wrapfigure}[9]{r}{0.25\textwidth}
    \centering
    \begin{tikzpicture}[
            every node/.style={circle,thick,draw},
            every edge/.style={draw,thick}]
            \node (0) at ( 0,1.8) {0};
            \node (1) at (-0.9,0.9) {1};
            \node (2) at (0,0) {2};
            \node (3) at (0.9,0.9) {3};
            \path (1) edge (0);
            \path (1) edge (2);
            \path (3) edge (2);
            \path (3) edge (0);
        \end{tikzpicture}
        \caption{A cycle network of 4 oscillators}
        \label{fig: C4 network}
\end{wrapfigure}
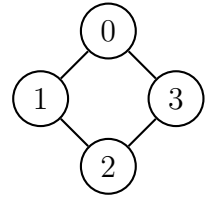 
The network of $N$ coupled nonlinear oscillators can be modeled by an undirected graph $G$
with vertices $V(G)$ and edges $E(G)$ representing 
the oscillators and their connections respectively.
In addition, each oscillator $i$ has its natural frequency $\omega_i$,
while a nonzero constant $K$ quantifies the coupling strength between two oscillators.
In this paper, we focus exclusively on the cases of cycle graphs of $N$ vertices.
That is, we only consider $G = C_N$
with $V(C_N) = \{0,1,\dots,n\}$, $n=N-1$ and
$E(C_N) = \{ \{0,1\}, \{1,2\},\dots,\{i,i+1\},\dots,\{n-1,n\},\{n,0\} \}$.
For example, \Cref{fig: C4 network} shows a cycle network of 4 oscillators.
We also assume that the oscillators are non-homogeneous,
i.e., $\omega_0,\dots,\omega_n$ are distinct,
but the coupling is uniform, i.e., the strength of the coupling along any edge is $K$.
Under these assumptions, 
the dynamics of the system is described by the Kuramoto model
with the governing equations
\begin{equation}\label{equ:kuramoto-sin}
    \frac{d \theta_i}{dt}=\omega_i - K \sum_{j \in \mathcal{N}_{C_N}(i)} \sin(\theta_i - \theta_j) 
    \quad\text{for } i = 0,\dots,n,
\end{equation}
where 
$\theta_i$ is the phase angle of the $i$-th oscillator 
and $\mathcal{N}_{C_N}(i)$ is the set of adjacent nodes of node $i$ in the $C_N$.
% , $\omega_i$ is its natural frequency. 
Frequency synchronization configurations (simply synchronization configurations, hereafter) 
are defined to be configurations of $(\theta_0,\dots,\theta_n)$ 
for which all oscillators are tuned to have the exact same angular velocity.
That is, there is a single constant $c$ such that $\frac{d\theta_i}{dt} = c$ for $i=0,\dots,n$.
By adopting a proper rotational frame of reference,
we can further assume $\theta_0 = 0$ and $c = 0$. 
Then the synchronization configurations are defined by the equilibrium conditions 
$\frac{d \theta_i}{dt}=0$ for $i=1,\dots,n$.
That is, they are solutions to the system of transcendental equations
\begin{equation}\label{equ:kuramoto-sin=0}
    0 = 
    \omega_i - K \sum_{j \in \mathcal{N}_{C_N}(i)} \sin(\theta_i - \theta_j) 
    \quad\text{for } i = 1,\dots,n.
\end{equation}
% equilibria in the Kuramoto network, which can be restated as a root counting problem:
% for generic choices of real coefficients $\omega_1,\dots,\omega_n$ and $K$,
% how many isolated solutions does the above system have?
%Counting real solutions of nonlinear systems
%is a notoriously difficult problem.
%A relaxation studied in recent works
%\cite{ChenDavisMehta2018Counting}
%is the problem of counting the complex solutions to
%an algebraic reformulation of the system
%which we will review in the following subsection.

%==============================================================
% \subsection{Algebraic Kuramoto equations} \label{Network}
Throughout this paper, we will use the following notations. Let
$f = \sum_{\bolda \in S} c_{\boldalpha} \boldx ^ \bolda$
denote a Laurent polynomial in $n$ variables $\boldx = (x_1,\dots,x_n)$ with coefficients in $\C$,
where the finite set $S \subset \Z^n$, known as its support, collects the exponents
and $\boldx^\bolda = x_1^{a_1} \cdots x_n^{a_n}$ represents the monomial
with exponent $\bolda = (a_1,\dots,a_n)^\top$.
% , be a Laurent polynomial  and $\mathbf{f} = (f_1,\dots,f_n)^\top$ be a Laurent polynomial system in $n$ complex variables. 
The Newton polytope of $f$ is the set $\newt(f):=\conv(S) \subset \R^n$. 
With respect to a nonzero vector $\boldalpha \in \R^n$, the initial form of $f$ is
$\init_{\boldalpha}f := \sum_{\bolda \in (S)_\boldv} c_{\boldalpha} \boldx ^ \bolda$, 
where $(S)_\boldv$ is the subset of $S$ on which the linear functional $\inner{ \boldv }{ \cdot }$ is minimized over $S$. 
For a system $\boldf = (f_1,\dots,f_n)^\top$ of Laurent polynomials,
the initial system with respect to $\boldalpha \in \R^n$ is
$\init_{\boldalpha} \boldf := (\init_{\boldalpha}f_1, \dots , \init_{\boldalpha} f_n)^\top$. 
While considering the root count of the system $\boldf$, 
we will make use of the Bernshtein's theorem \cite{Bernshtein1975Number}, 
which states that the generic root count of the system $f$ in $(\C^*)^n = (\C \setminus \{0\})^n$
is given by the mixed volume of the Newton polytopes $\newt(f_i)$, $i=1,\dots, n$. 
This count is also known as the Bernshtein-Kushnirenko-Khovanskii (BKK) bound
\cite{Bernshtein1975Number,Kushnirenko1976Polyedres,Khovanskii1978Newton}.

The central question of finding the maximum number of synchronization configurations for a cycle network of oscillators with uniform coupling is equivalent to the root counting question of the system ~\eqref{equ:kuramoto-sin=0}. To leverage the power of root counting results from algebraic geometry,
the transcendental equations~\eqref{equ:kuramoto-sin=0}
can be reformulated into an algebraic system
via the change of variables $x_i = e^{\imag \theta_i}$
for $i = 0,\dots,n$ 
where $\imag = \sqrt{-1}$ and $x_0 = e^{\imag 0} = 1$ corresponds to the fixed phase angle
of the reference oscillator. Then 
$\sin(\theta_i - \theta_j) =  \frac{1}{2 \imag}(\frac{x_i}{x_j} - \frac{x_j}{x_i})$,
and~\eqref{equ:kuramoto-sin=0} is transformed into a system of 
$n$ Laurent polynomial equations 
% On a graph $G$ of $N = n+1$ nodes $\{0,1,\dots,n\}$,
% the \emph{algebraic Kuramoto system} is a system
$\mathbf{f} = (f_1,\dots,f_n)^\top = \boldzero$ 
in the $n$ complex variables $\boldx = (x_1,\dots,x_n)$ given by
\begin{equation} % --- Algebraic Kuramoto equation
    \label{equ:kuramoto-alg}
    f_{i}(x_1,\dots,x_n) = 
    \omega_i - a \sum_{j \in \mathcal{N}_{C_N}(i)} 
        \left(
            \frac{x_i}{x_j} - \frac{x_j}{x_i}
        \right)
    = 0
    \quad \text{for } i = 1,\dots,n,
\end{equation} %----------------------------------
where $a = \frac{K}{2\imag}$.
% In this paper, we focus exclusively on the cases of cycle graphs.
% That is, we only consider $G = C_N$
% with $V(C_N) = \{0,1,\dots,n\}$, $n=N-1$, and
% $E(C_N) = \{ \{0,1\}, \{1,2\},\dots,\{i,i+1\},\dots,\{n-1,n\},\{n,0\} \}$.
This system captures all synchronization configurations in a way that 
the real solutions to~\eqref{equ:kuramoto-sin=0} correspond to the
complex solutions of~\eqref{equ:kuramoto-alg} with each $|x_i| = |e^{\imag \theta}| = 1$,
i.e., solutions on the real torus $(S^1)^n$.

\subsection{Problem statements}

Counting solutions of an algebraic system on real torus $(S^1)^n$
is a notoriously difficult problem.
Using Morse inequalities and the Betti numbers of the real torus,
lower bounds on the generic solution count was established by
Baillieul and Byrnes in certain cases~\cite{Baillieul1982}
(e.g. the cases of homogeneous oscillators with nondegenerate synchronization states).
%To leverage the root counting results from complex algebraic geometry, 
%we extend the search to all complex isolated solutions.
An upper bound for the number of solutions is also established 
to be ${2N-2}\choose{N-1}$ in the same papers 
by bounding the total number of complex solutions to~\eqref{equ:kuramoto-alg}.
This upper bound does not take into consideration the graph topology and coupling coefficients
and only depends on the number of oscillators $N$.
% The root counting question for this system  was first studied 
% in the early 1980s \cite{Baillieul1982,Baillieul1984}.
% Using a bi-homogeneous B\'ezout number formulation,
% a root count that is independent from the graph topology and coupling coefficients was established.
Following this approach of counting complex solutions,
later works (e.g. root counting results given by Guo and Salam~\cite{Guo1990} 
and Molzahn, Mehta, and Niemerg~\cite{MolzahnMehtaNiemerg2016Toward})
suggest that sparse networks tend to have less synchronization configurations. 
These observations have motivated a study on the tighter upper bound 
for the number of synchronization configurations in cycle networks 
with non-uniform coupling coefficients~\cite{Chen2019Unmixing, ChenDavisMehta2018Counting}
where a sharp upper bound (counting complex synchronization configurations)
is shown to be $N \binom{N-1}{ \lfloor (N-1)/2 \rfloor }$.
In this paper we provide a significant refinement of the 
root count for the cases of cycle networks with uniform coupling.

% \begin{problem}\label{prb:one}
%     Given a cycle graph $C_N$ of $N$ nodes,
%     what is the maximum number of isolated complex solutions
%     to the algebraic Kuramoto system~\eqref{equ:kuramoto-alg} 
%     with uniform coupling coefficients?
% \end{problem}

An important result from the intersection theory is that, over the field of complex numbers,
the maximal behavior is also, in a sense, the generic behavior
(e.g. the Theorem of Bertini~\cite{SommeseWampler2005Numerical}).
The key question we aim to answer is therefore the following generic root count question.

\begin{problem}\label{prb:one}
    Given a cycle graph $C_N$ of $N$ nodes
    and generic choices of parameters $\boldomega = (\omega_1,\dots,\omega_n)$ and $a$,
    what is the total number of isolated complex solutions
    to the algebraic Kuramoto system~\eqref{equ:kuramoto-alg}?
\end{problem}

This question can also be stated in terms of birationally invariant intersection index.
Since the $i$-th Laurent polynomial in~\eqref{equ:kuramoto-alg} is a linear combination of
$1$ and $\ell_i := \sum_{j \in \mathcal{N}_{C_N}(i)} ( x_i / x_j - x_j / x_i)$
with generic coefficient.
It can be considered as a generic element in the vector space of rational functions spanned by
$\{ 1, \ell_i \}$.
We are therefore interested in the intersection index of $n$ generic elements
from these vector spaces in $(\C^*)^n$.
This is precisely the birationally invariant intersection index
\cite{KavehKhovanskii2010Mixed}.

\begin{problem}\label{prb:two}
    Given a cycle graph $C_N$ of $N$ nodes, let
    \[
        L_i = \operatorname{span} 
        \left\{ 
            1, 
            \sum_{j \in \mathcal{N}_{C_N}(i)} \left( \frac{x_i}{x_j} - \frac{x_j}{x_i} \right)
        \right\}
    \]
    be the $\C$-vector space spanned by two rational functions
    for each $i=1,\dots,n$.
    What is the intersection index $[\, L_1,\dots,L_n \,]$ ?
\end{problem}

This intersection index is less than or equal to the 
%Bernshtein-Kushnirenko-Khovanskii 
BKK bound
%\cite{Bernshtein1975Number,Kushnirenko1976Polyedres,Khovanskii1978Newton}
for the same set of equations.
In this paper, we show that there is a gap between the intersection index described above
and the BKK bound if and only if $N$ is divisible by 4.
Indeed, in this case, $[L_1,\dots,L_n]$ is significantly smaller than the BKK bound
in the sense that the ratio of the two goes to zero as $N \to \infty$.
% Despite the different interpretations, 
% \Cref{prb:one} and \Cref{prb:two} have the same answer.

%==============================================================
\section{Adjacency polytope} \label{sec: Adjacency polytope}
%Recent studies~\cite{ChenDavisMehta2018Counting,Chen2019Unmixing}
%suggest that the range of possible synchronization configurations
%is strongly tied to the network topology.
%Among many different approaches for incorporating topological information
%into the study of Kuramoto equations, a promising approach proposed recently
%makes use of a construction known as ``adjacency polytope'' to encode the network topology,
% In order to encode the topological information of the Kuramoto network described above, 
% we construct the adjacency polytope, 
%which will provide valuable insights into the algebraic structure of the Kuramoto equations.
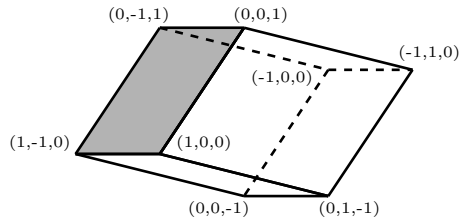
\begin{wrapfigure}[10]{r}{0.45\textwidth}
    \centering
    \begin{tikzpicture} [scale=0.28]
    \draw [line width=1pt ] (0,4)-- (8,2);
    \draw [line width=1pt] (8,2)-- (4,-4);
    \draw [line width=1pt] (4,-4)-- (-4,-2);
    \draw [line width=1pt] (-4,-2)-- (0,4);
    \draw [line width=1pt, dashed ] (-4,4)-- (4,2);
    \draw [line width=1pt, dashed] (4,2)-- (8,2);
    \draw [line width=1pt, dashed] (4,2)-- (0,-4);
    \draw [line width=1pt] (-8,-2)-- (-4,-2);
    \draw [line width=1pt] (-4,-2)-- (0,4);
    \draw [line width=1pt] (0,4)-- (-4,4);
    \draw [line width=1pt] (-4,4)-- (-8,-2);
    \draw [line width=1pt] (0,4)-- (-4,-2);
    \draw [line width=1pt] (-8,-2)-- (0,-4);
    \draw [line width=1pt] (0,-4)-- (4,-4);
    \draw [line width=1pt] (4,-4)-- (-4,-2);
    \draw [line width=1pt] (-4,-2)-- (-8,-2);
    \begin{tiny}
    \draw[color=black] (-5,4.6) node {(0,-1,1)};
    \draw[color=black] (0.8,4.6) node {(0,0,1)};
    \draw[color=black] (-9.7,-1.5) node {(1,-1,0)};
    \draw[color=black] (-1.9,-1.5) node {(1,0,0)};
    \draw[color=black] (-1.2,-4.6) node {(0,0,-1)};
    \draw[color=black] (5,-4.6) node {(0,1,-1)};
    \draw[color=black] (8.8,2.8) node {(-1,1,0)};
    \draw[color=black] (1.8, 1.6) node {(-1,0,0)};
    \end{tiny}
    \fill[fill=black,fill opacity=0.3] (-8,-2) -- (-4,4) -- (0,4) -- (-4,-2) -- cycle;
    \end{tikzpicture}
    \caption{Adjacency polytope for a cycle network of 4 oscillators}
    \label{fig: adj polytope}
\end{wrapfigure}

Recent studies suggest that the range of possible synchronization configurations
is strongly tied to the network topology \cite{Bronski2016Graph, MolzahnMehtaNiemerg2016Toward}.
A promising approach to elucidate this connection \cite{Chen2019Directed, ChenDavisMehta2018Counting,Chen2019Unmixing}
makes use of a construction known as an ``adjacency polytope''. 
This method allows us to encode the network topology and 
provides valuable insights into the algebraic structure of the Kuramoto equations. 
The adjacency polytope constructed in this context coincides with
the symmetric edge polytope introduced earlier in the study of the roots of Ehrhart polynomials~\cite{Matsui2011Roots}.
The geometric structure of adjacency polytopes has been instrumental in the study of 
generic root count of the algebraic Kuramoto equations~\eqref{equ:kuramoto-alg} 
in the case of cycle networks with generic non-uniform coupling
\cite{ChenDavisMehta2018Counting}.
In this paper, we extract more refined initial form information 
from the polytopes and establish a sharper bound on the generic root count
in the case of uniform coupling.

For the cycle graph $C_N$, its \emph{adjacency polytope} is defined as
\begin{equation}\label{equ:AP}
    \nabla_{C_N} = \conv \left\{
        \pm (\bolde_i - \bolde_j)
    \right\}_{\{i,j\} \in E(C_N)},
\end{equation}
where $\bolde_i \in \R^n$ is the column vector with 1 on the $i$-th position
and zero elsewhere for $i=1,\dots,n$, and $\bolde_0 = \boldzero$. The polytope $\nabla_{C_N}$ is a full-dimensional centrally symmetric lattice polytope. 
It is originally constructed as the Newton polytope of the randomized system 
$\boldf^{R} := R \cdot \boldf$,
which is created from a nonsingular $n \times n$ matrix $R = [ r_{ij} ]$ with generic entries.
Here, components of $\boldf^R = (f_1^R,\dots,f_n^R)$ are of the form
% which preserves the zero set.
% the \emph{randomization} of $\mathbf{f}$ with respect to $R$ is the system 
% $R \, \boldf$ which has the exact same zero set.
% \begin{equation}
%     \begin{bmatrix}
%         r_{11} & \dots & r_{1n} \\
%         \vdots & \ddots & \vdots \\
%         r_{n1} & \dots & r_{nn}
%     \end{bmatrix}
%     \begin{bmatrix}
%         f_{G,1} \\ \vdots \\ f_{G,n}
%     \end{bmatrix}.
% \end{equation}
% For generic choices of $R$, we define the \emph{randomization}
% $\boldf^R = R \cdot \boldf = (f_1^R,\dots,f_n^R)^\top$ as
\begin{equation}
    f_{k}^R = 
    c_k -
    \sum_{\{i,j\} \in E(C_N),i<j} a_{ijk}^R 
    \left(
        \frac{x_i}{x_j} - \frac{x_j}{x_i}
    \right)
    \quad\text{for } k = 1,\dots,n,
\end{equation}
where $\boldc = (c_1,\dots,c_n)^\top = R \, \boldomega$, 
$E(C_N)$ is the edge set 
% \footnote{%
%     For consistency, we consider $E$ to be a set of
%     directed edges here.
%     That is, for two adjacent nodes $i$ and $j$,
%     we consider both $(i,j)$ and $(j,i)$ to be in $E$.
% } 
of $C_N$, and $a^R_{ijk} = a (r_{ki} - r_{kj})$.
Each component in $\boldf^R$ has the same set of terms,
and the (unmixed) Newton polytope of $\boldf^R$,
%defined to be the convex hull of the exponent vectors in all nonzero terms
%and denoted by 
$\newt(\boldf^R)$,
is precisely the adjacency polytope $\nabla_{C_N}$.
% The \emph{randomization} process simplifies
% the algebraic structure of the problem.
Since $R$ is nonsingular, $\boldf$ and $\boldf^R = R \cdot \boldf$
share the same zero set.
Yet, the randomization simplifies the algebraic structure of the problem 
and allows us to utilize the known results concerning the triangulation of 
$\nabla_{C_N}$~\cite{ChenDavis2018Toric, ChenDavisMehta2018Counting}.
The normalized volume of $\nabla_{C_N}$, 
also known as \emph{adjacency polytope bound}, 
is the BKK bound for both the algebraic Kuramoto system~\eqref{equ:kuramoto-alg}
and the randomized system $\boldf^R$.

In the following discussion, 
an important role is given to the facets ($(n-1)$-dimensional faces) of $\nabla_{C_N}$. 
As described in Ref.~\cite{ChenDavisMehta2018Counting}, 
when $N$ is odd, $\nabla_{C_N}$ is unimodularly equivalent to the 
\emph{del Pezzo polytope} \cite{Nill2005Classification}.  
The number of facets of $\nabla_{C_N}$ is $N {{N-1}\choose{(N-1)/2}}$.
Each facet is simplicial, unimodular, and given by
%and is defined by exactly $n=N-1$ vertices 
% and each facet is of the form
\begin{equation}
\begin{split}
\label{def_oddfacet}
    \conv \Biggl\{
        \lambda_j (\bolde_i - \bolde_j)
        \,\Bigm|\, 
       \{i,j\} \in E(C_N) \setminus \bigl\{\{p,q\}\bigr\}  \text{ for some } \{p,q\} \in E(C_N) , \\   \lambda_1,\dots, \lambda_{q-1}, \lambda_{q+1}, \dots, \lambda_{n+1} \in \{ \pm 1 \},\; 
       \sum_{j=1}^{q-1} \lambda_j +\sum_{j=q+1}^{n+1} \lambda_j =0
    \Biggr\}.
\end{split}
\end{equation}

When $N$ is even, $\nabla_{C_N}$ has $N\choose{N/2}$ facets \cite{ChenDavis2018Toric, dal2019faces}. 
Each facet $F$ is defined by exactly $N = n + 1$ vertices 
and is of the form
\begin{equation} \label{def_evenfacet}
    \conv \Biggl\{
        \lambda_j (\bolde_i - \bolde_j)
        \, \Bigm| \, \{i,j\} \in E(C_N), \,
        \lambda_1,\dots \lambda_{n+1} \in \{ \pm 1 \},\; 
        \sum_{j=1}^{n+1} \lambda_j =0
    \Biggr\}.
\end{equation}     
% The detailed description of facets, please refer to \cite{ChenDavis2018Toric, ChenDavisMehta2018Counting}.
% {\color{purple}[TODO: facet subnetworks]Facets of $\nabla_{C_N}$ are in one-to-one correspondence with directed
% acyclic subnetworks  of the original network (facet subnetworks) \cite{Chen2019Directed}. }

\begin{example} \label{example-c4} (Running example, 4-cycle). 
    Our reference example throughout this paper will be a a cycle network with $N=4$ coupled oscillators. See Figure \ref{fig: C4 network}. Synchronization configurations of this network are characterized by the Algebraic Kuramoto equations 
    \begin{align} \label{ex: C-4 synchr.system}
        \omega_1 - a\left(\frac{x_1}{x_0}-\frac{x_0}{x_1}+\frac{x_1}{x_2}-\frac{x_2}{x_1}\right)=0, \nonumber\\
        \omega_2 - a\left(\frac{x_2}{x_1}-\frac{x_1}{x_2}+\frac{x_2}{x_3}-\frac{x_3}{x_2}\right)=0,\\
        \omega_3 - a\left(\frac{x_3}{x_2}-\frac{x_2}{x_3}+\frac{x_3}{x_0}-\frac{x_0}{x_3}\right)=0. \nonumber
    \end{align}
% \end{example}
% \begin{example}
% (Running example, 4-cycle). 
The adjacency polytope associated with this network 
is a parallelepiped as illustrated on Figure \ref{fig: adj polytope}.
Its six facets are
\begin{align} \label{facets 4-cycle}
    &\conv \left\{
    \left[ \begin{smallmatrix} 1\\0\\0 \end{smallmatrix}
    \right],
    \left[\begin{smallmatrix} \phantom{-}1\\-1\nonumber\\\phantom{-}0 \end{smallmatrix}\right], 
    \left[\begin{smallmatrix} \phantom{-}0\\{-}1\\\phantom{-}1 \end{smallmatrix} \right],
    \left[\begin{smallmatrix} 0\\0\\1 \end{smallmatrix}
    \right] \right\},
    &&\conv \left\{
    \left[\begin{smallmatrix} 1\\0\\0 \end{smallmatrix}\right],
    \left[\begin{smallmatrix} {-}1\\\phantom{-}1\\\phantom{-}0 \end{smallmatrix}\right], 
    \left[\begin{smallmatrix} \phantom{-}0\\\phantom{-}1\\{-}1 \end{smallmatrix}\right],
    \left[\begin{smallmatrix} 0\\0\\1 \end{smallmatrix}\right]
    \right\},\\ 
    &\conv \left\{
    \left[\begin{smallmatrix} 1\\0\\0 \end{smallmatrix}\right],
    \left[\begin{smallmatrix} \phantom{-}1\\{-}1\\\phantom{-}0 \end{smallmatrix}\right], 
    \left[\begin{smallmatrix} \phantom{-}0\\\phantom{-}1\\{-}1 \end{smallmatrix}\right],
    \left[\begin{smallmatrix} \phantom{-}0\\\phantom{-}0\\{-}1 \end{smallmatrix}\right]
    \right\},
    &&\conv \left\{
    \left[\begin{smallmatrix} {-}1\\\phantom{-}0\\\phantom{-}0 \end{smallmatrix}\right],
    \left[\begin{smallmatrix} {-}1\\\phantom{-}1\\\phantom{-}0 \end{smallmatrix}\right], 
    \left[\begin{smallmatrix} \phantom{-}0\\{-}1\\\phantom{-}1 \end{smallmatrix}\right],
    \left[\begin{smallmatrix} 0\\0\\1 \end{smallmatrix}\right]
    \right\},\\
    &\conv \left\{
    \left[\begin{smallmatrix} {-}1\\\phantom{-}0\\\phantom{-}0 \end{smallmatrix}\right],
    \left[\begin{smallmatrix} {-}1\\\phantom{-}1\\\phantom{-}0 \end{smallmatrix}\right], 
    \left[\begin{smallmatrix} \phantom{-}0\\\phantom{-}1\\{-}1 \end{smallmatrix}\right],
    \left[\begin{smallmatrix} \phantom{-}0\\\phantom{-}0\\{-}1 \end{smallmatrix}\right]
    \right\},
    &&\conv \left\{
    \left[\begin{smallmatrix} {-}1\\\phantom{-}0\\\phantom{-}0 \end{smallmatrix}\right],
    \left[\begin{smallmatrix} \phantom{-}1\\{-}1\\\phantom{-}0 \end{smallmatrix}\right], 
    \left[\begin{smallmatrix} \phantom{-}0\\{-}1\\\phantom{-}1 \end{smallmatrix}\right],
    \left[\begin{smallmatrix} \phantom{-}0\\\phantom{-}0\\{-}1 \end{smallmatrix}\right]
    \right\}.\nonumber
\end{align}        
The normalized volume of each facet is 2,
and the adjacency polytope bound is 12.
This also agrees with the BKK bound of \eqref{ex: C-4 synchr.system}.
We will show, however, that
the generic root count under the uniform coupling condition,
i.e., the intersection index $[L_1,L_2,L_3]$, is only 6.

\end{example}

We next prove some important properties of the facets of $\nabla_G$. For a facet $F$ of $\nabla_G$, we define its \emph{facet matrix} to be a matrix $V$ 
whose columns are the vertices of $F$, arranged in such a way, 
that vertex $\lambda_k (\bolde_i - \bolde_j)$ 
corresponds to the $i$-th column of $V$. 
%We will call $V$ the \emph{facet matrix} of $F$. 
Let us denote by $V^*$ the reduced row echelon form of $V$. 
We will refer to $V^*$ as a \emph{reduced facet matrix}. 
Finally, we define a \emph{facet reduction matrix} $Q$ as an $(N-1) \times (N-1)$ matrix that satisfies $QV=V^*$. 

\begin{lemma} \label{Lemma: reduction matrix Q}
    For a facet $F$ of $\nabla_G$ with its facet matrix $V$ and reduced facet matrix $V^*$,
    the facet reduction matrix $Q$ is a unimodular integer matrix.
\end{lemma}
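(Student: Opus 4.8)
The plan is to show that $Q$ is an integer matrix with $\det Q = \pm 1$. The key observation is that $V$ is a square integer matrix (it has $N-1 = n$ columns, one for each nonzero standard basis index $i \in \{1,\dots,n\}$, and $n$ rows), and that $QV = V^*$ where $V^*$ is the reduced row echelon form of $V$. Since row reduction over $\mathbb{Q}$ is invertible, $Q = V^* V^{-1}$ is uniquely determined once we know $V$ is nonsingular. So the first task is to establish that $V$ is invertible, i.e., that the vertices of a facet $F$, arranged as columns indexed by $i$, form a nonsingular matrix. This should follow from the fact that $F$ is a facet of a full-dimensional centrally symmetric polytope $\nabla_{C_N}$ and is simplicial with exactly $n+1 = N$ vertices lying in an affine hyperplane not through the origin; the chosen column-indexing by $i$ picks out $n$ of these vertices whose underlying index set $\{i : \lambda_k(\bolde_i-\bolde_j) \text{ a vertex}\}$ exhausts $\{1,\dots,n\}$, and the resulting $n \times n$ matrix is nonsingular because the $n$ vectors $\bolde_i - \bolde_j$ involved are linearly independent (the facet spans an affine hyperplane of dimension $n-1$, and together with the supporting normal they span $\R^n$).

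Once $V$ is known to be nonsingular, the heart of the argument is to prove $\det Q = \pm 1$, equivalently $\det V^* = \pm \det V$. For this I would use the fact, recorded in the excerpt, that every facet of $\nabla_{C_N}$ is \emph{unimodular}: its normalized volume is $1$, which means the lattice vectors $\{\bolde_i - \bolde_j\}$ appearing (up to the signs $\lambda_j$) span a sublattice of $\Z^n$ of index one, so that $\det V = \pm 1$. Separately, $V^*$ is the reduced row echelon form of an integer matrix, and since $V$ is a unimodular integer matrix its RREF over $\Q$ is forced to be the identity matrix (a square nonsingular matrix in RREF has $1$'s on the diagonal and no free columns, hence equals $I_n$). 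Therefore $V^* = I_n$, giving $\det V^* = 1 = \pm\det V$, so $\det Q = \det V^* / \det V = \pm 1$. Integrality of $Q$ then follows because $Q = (V^*) V^{-1} = V^{-1}$, and the inverse of a unimodular integer matrix is again an integer matrix (its entries are cofactors divided by $\det V = \pm 1$).

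The cleanest route is thus to first invoke unimodularity of the facet (from the structure results~\eqref{def_oddfacet} and~\eqref{def_evenfacet} cited earlier) to conclude $\det V = \pm 1$, then deduce $V^* = I_n$ and $Q = V^{-1}$, and finally read off that $Q$ is integral with determinant $\pm 1$. I would handle the even and odd cases uniformly, since in both the facet is stated to be simplicial and unimodular with exactly $n$ nonzero-index columns. The main subtlety to get right is the indexing convention in the definition of the facet matrix: I must verify that the map sending each vertex $\lambda_k(\bolde_i - \bolde_j)$ to the $i$-th column is well-defined and bijective onto the columns $1,\dots,n$, i.e., that as $\{i,j\}$ ranges over the edges used by the facet, each index in $\{1,\dots,n\}$ occurs exactly once as the ``$i$'' slot. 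This is where I expect the only real obstacle to lie, and it reduces to a combinatorial check on the edge set of $C_N$ and the orientation chosen in the vertex labeling; once this bookkeeping is settled, the unimodularity of $V$ makes the rest immediate.
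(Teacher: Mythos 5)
Your argument is fine for odd $N$, where it coincides with the paper's own proof: there the facet omits one edge of the cycle, has $N-1$ vertices, $V$ is a square unimodular integer matrix, its reduced row echelon form is the identity, and $Q = V^{-1}$ is integral with determinant $\pm 1$. But the proposal breaks down for even $N$, because its central premise --- that $V$ is a square $n \times n$ matrix --- is false in that case. When $N$ is even, a facet of $\nabla_{C_N}$ is \emph{not} a simplex: by \eqref{def_evenfacet} it has exactly $N = n+1$ vertices, one for each of the $N$ edges of the cycle, so the facet matrix $V$ is $(N-1) \times N$. Your statement that ``in both [cases] the facet is stated to be simplicial and unimodular'' is a misreading: the paper asserts simpliciality and unimodularity only in the odd case \eqref{def_oddfacet}. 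For a non-square $V$ there is no $\det V$ and no $V^{-1}$, and the RREF is not the identity --- it has the form $V^* = \bigl[\, I_{N-1} \mid \boldh \,\bigr]$ with a nontrivial last column whose entries are $-\lambda_i \lambda_N$. Hence every link in your chain ``unimodular facet $\Rightarrow \det V = \pm 1 \Rightarrow V^* = I_n \Rightarrow Q = V^{-1}$'' fails for even $N$. Indeed, your own bookkeeping worry --- whether each index in $\{1,\dots,n\}$ occurs exactly once as a column index --- points at the real issue: with $N$ vertices and only $N-1$ rows no such bijection can exist, and this should have signaled that the square-matrix assumption cannot hold.

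To close the gap you need a genuinely different argument in the even case, which is what the paper supplies: it factors $V$ as an $(N-1)\times N$ bidiagonal matrix (columns $\bolde_{j-1}-\bolde_j$, with the convention $\bolde_0 = \boldzero$) times $\operatorname{diag}(\lambda_1,\dots,\lambda_N)$, and then \emph{exhibits} $Q$ explicitly as $\operatorname{diag}(-\lambda_1,\dots,-\lambda_{N-1})$ times the upper triangular all-ones matrix. This $Q$ is visibly an integer matrix of determinant $\pm\prod_k(-\lambda_k) = \pm 1$, and a direct computation gives $QV = \bigl[\, I_{N-1} \mid \boldh \,\bigr] = V^*$. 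So unimodularity of $Q$ in the even case is established by construction, not deduced from unimodularity of the facet; your approach cannot be repaired without essentially reproducing this construction (or some other argument that never invokes $V^{-1}$).
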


\begin{proof}
    If $N$ is odd, then $V$ is a square unimodular matrix with integer entries.
    Its reduced row echelon form is therefore the identity matrix,
    and $Q = V^{-1}$ is a unimodular integer matrix.
    
    If $N$ is even,
    employing the description of the facets given in \eqref{def_evenfacet},
    we write each facet matrix $V$ as 
    % \begin{equation}
    %     V_k \;=\; B \; \Lambda_k,
    % \end{equation} 
    % where $B$ is the $(N-1) \times N$ matrix with the columns 
    % $\bolde_0 - \bolde_1, \bolde_1 - \bolde_2, \dots , \bolde_{n-1} - \bolde_n, \bolde_n - \bolde_0$,
    % and $ \Lambda_k$ is the $N \times N$ diagonal matrix 
    % whose entries $\lambda_{k1}, \dots \lambda_{kN} \in \{-1,1\}$
    % satisfy $\sum_{i=1}^N \lambda_k = 0$.
    % Now, for each facet matrix 
    \begin{equation}
        V=\begin{bmatrix}
            -1 &  1     &        &    \\
               & \ddots & \ddots &    \\
               &        & -1     & 1  \\
         \end{bmatrix}
         \begin{bmatrix}
           \lambda_1 &        &              \\
                     & \ddots &              \\
                     &        & \lambda_N    \\
         \end{bmatrix}.
    \end{equation} 
    % where entries $\lambda_{1}, \dots \lambda_{N} \in \{-1,1\}$
    % satisfy $\sum_{i=1}^N \lambda_i = 0$.
    We define an $(N-1) \times (N-1)$ unimodular (integer) matrix %$Q$ as 
    \begin{equation}
        Q \coloneqq  
        \begin{bmatrix}
           -\lambda_{1} &               &           &                        \\
                        &  -\lambda_{2} &           &                        \\  
                        &               & \ddots    &                          \\
                        &               &           & -\lambda_{N-1}             \\
        \end{bmatrix}
        \begin{bmatrix}
         1  &  1 &  \dots  &   1               \\
            &  1 &  \dots  &   1          \\
            &    &  \ddots &   \vdots        \\
            &    &         &   1         \\
        \end{bmatrix},
    \end{equation}
    which has determinant $\pm 1$ since $\lambda_k = \pm 1$.
    Via a direct computation, we observe that 
    \begin{equation} \label{V*}
        QV = \begin{bmatrix}
             1 &    &        &        &  -\lambda_{1} \lambda_{N}  \\
               &  1 &        &        &  -\lambda_{2} \lambda_{N} \\
           
               &    & \ddots &        &   \vdots \\
               &    &        &  1     & -\lambda_{N-1} \lambda_{N}  \\
        \end{bmatrix}=V^*.
    \end{equation}
    Therefore, the integer matrix $Q$ is the facet reduction matrix.
\end{proof}

It has been mentioned in Ref.~\cite{ChenDavis2018Toric} that
facets of $\nabla_G$ are unimodularly equivalent to one another. 
In \Cref{prop.unimod.equiv}, we provide an equivalent proof in terms of facet matrices. 

\begin{proposition} \label{prop.unimod.equiv}
    Let $F$ and $F'$ be two facets of the adjacency polytope $\nabla_{C_N}$, 
    and let $V$ and $V'$ be their corresponding facet matrices. 
    Then there exist a unimodular $(N-1) \times (N-1)$ matrix $U$ 
    and a $N \times N$ permutation matrix $P$ such that $UVP=V'$.
\end{proposition}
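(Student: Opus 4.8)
The plan is to show that any two facet matrices are related by the transformation $UVP = V'$ by exploiting the reduced facet matrices from Lemma \ref{Lemma: reduction matrix Q}. The key observation is that after reduction to reduced row echelon form, the structural differences between facets collapse to a common canonical form (up to column permutation), so the composition of the individual reductions and an intermediate permutation yields the desired unimodular equivalence.

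First I would treat the odd and even cases separately, mirroring the proof of Lemma \ref{Lemma: reduction matrix Q}. When $N$ is odd, each facet matrix $V$ is square and unimodular, so $V^* = I_{N-1}$ and $Q = V^{-1}$ is unimodular and integral by the lemma. Then for two facets with matrices $V$ and $V'$, both reduce to the identity: $QV = I = Q'V'$, giving $V' = (Q')^{-1} Q V$. Setting $U = (Q')^{-1} Q$ (a product of unimodular integer matrices, hence unimodular) and $P = I$ completes this case immediately.

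The even case is where the real work lies, and I expect the main obstacle to be controlling the effect of the sign parameters $\lambda_k$ and the column ordering so that a single permutation $P$ suffices. By \eqref{V*}, each reduced facet matrix has the form $\begin{bmatrix} I_{N-1} & \boldv \end{bmatrix}$ where $\boldv$ is the column of entries $-\lambda_k \lambda_N$, but this presumes the vertices are arranged so the last column corresponds to a fixed deleted edge. For two facets $F$ and $F'$, the deleted-column positions and the sign patterns $\{\lambda_k\}$, $\{\lambda_k'\}$ generally differ. The strategy is: apply the unimodular reductions $Q$ and $Q'$ from the lemma to obtain $V^* = QV$ and $(V')^* = Q'V'$; then find a permutation matrix $P$ acting on columns (equivalently on the $N$ vertices) that aligns the special column of $V^*$ with that of $(V')^*$ and brings the identity blocks into agreement. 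One must verify that permuting columns of $V^*$ to match $(V')^*$ is achievable by a genuine $N \times N$ permutation — this is plausible because all even facets share the same combinatorial type (each is defined by the sign constraint $\sum \lambda_j = 0$), but it requires checking that the sign vector $\boldv$ can be matched up to the residual unimodular freedom.

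Once the alignment is arranged, I would assemble the pieces: write $V^* P = U_0 (V')^*$ for some unimodular $U_0$ absorbing any remaining sign adjustments along the diagonal (each $\lambda_k = \pm 1$ contributes only a unimodular diagonal factor), and then back-substitute $V^* = QV$ and $(V')^* = Q'V'$ to obtain $QV P = U_0 Q' V'$, hence $V' = (Q')^{-1} U_0^{-1} Q V P^{-1}$. Relabeling $U = (Q')^{-1} U_0^{-1} Q$ and replacing $P$ by $P^{-1}$ (still a permutation matrix) gives $UVP = V'$ with $U$ unimodular. The cleanest exposition would likely reduce both facets to the common canonical form $\begin{bmatrix} I_{N-1} & \boldv_0 \end{bmatrix}$ with a fixed reference sign pattern, thereby routing the argument through a single intermediate representative rather than comparing $F$ and $F'$ directly; the nontrivial step remains showing the column permutation and the diagonal sign matrix together account for all discrepancies, which follows from the uniform combinatorial description of facets in \eqref{def_evenfacet}.
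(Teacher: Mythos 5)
Your skeleton matches the paper's proof: the odd case is handled identically (there $U = V'V^{-1} = (Q')^{-1}Q$, $P = I$), and in the even case both you and the paper pass through the reduced facet matrices $V^* = QV$ and $(V')^* = Q'V'$ of \Cref{Lemma: reduction matrix Q} and then seek a column permutation aligning them. However, you leave the one step that carries all the content of the even case unproven. Writing $V^* = [\, I \mid \boldh \,]$ and $(V')^* = [\, I \mid \boldh' \,]$ as in \eqref{V*}, the existence of your permutation $P$ hinges on the fact that $\boldh$ and $\boldh'$ have the \emph{same multiset of entries}, so that some permutation matrix $L$ satisfies $L\boldh = \boldh'$ (after which $LV^*P = (V')^*$ for a suitable $P$, and $U = (Q')^{-1}LQ$ finishes). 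You call this ``plausible'' and say it ``follows from the uniform combinatorial description'' in \eqref{def_evenfacet}, but you never verify it. The paper closes exactly this gap with a short counting argument: since $\sum_{i=1}^N \lambda_i = 0$, exactly half of the signs are positive, so whether $\lambda_N = +1$ or $\lambda_N = -1$, the vector with entries $-\lambda_k\lambda_N$, $k = 1,\dots,N-1$, has exactly $\frac{N}{2}$ entries equal to $+1$ and $\frac{N}{2}-1$ entries equal to $-1$ --- for \emph{every} facet. That is the reason the two last columns are permutations of one another, and it is the heart of the proof.

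Moreover, your fallback --- absorbing any residual discrepancy into a ``diagonal sign'' unimodular factor $U_0$ --- would not rescue the argument if the multisets disagreed. If $U_0$ is a nontrivial sign matrix, then $U_0(V')^* = [\, U_0 \mid U_0\boldh' \,]$ has some columns equal to $-\bolde_i$, and no column of $[\, I \mid \boldh \,]P$ can equal $-\bolde_i$: its columns are the standard basis vectors together with $\boldh$, and $\boldh$ has more than one nonzero entry for $N \geq 4$. So the sign-absorption idea cannot substitute for the counting argument; the multiset equality must be established, and without it the claimed permutation $P$ need not exist.
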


\begin{proof}
% We will consider the two following cases.\\ 
\noindent(Odd N) 
For odd $N$, $V$ is square and unimodular, 
and therefore $V^{-1}$ is an integer matrix. 
We let $U = V' V^{-1}$, 
and let $P=I_{N-1}$ be the $(N-1) \times (N-1)$ identity matrix, 
then $UVP=V'$.

(Even $N$) For even $N$,  let 
$(\lambda_{1}, \dots \lambda_{N}), (\lambda_1',\dots,\lambda_N') \in \{-1,1\}^N$ 
be the indices of the $F$ and $F'$ respectively.
These indices satisfy the conditions 
$\sum_{i=1}^N \lambda_i  = 0$ and
$\sum_{i=1}^N \lambda_i' = 0$.
Therefore exactly half of the entries in each collection of indices are positive.
Let $Q$ and $Q'$ be the facet reduction matrices of $F$ and $F'$ respectively.
As stated in \eqref{V*}, the corresponding reduced facet matrices are given by
\begin{align*} 
    V^* = QV &= 
    \left[
    \begin{smallmatrix}
         1 &    &        &        &  -\lambda_{1} \lambda_{N}  \\
           &  1 &        &        &  -\lambda_{2} \lambda_{N} \\
           &    & \ddots &        &   \vdots \\
           &    &        &  1     & -\lambda_{N-1} \lambda_{N}  \\
    \end{smallmatrix}
    \right]
    &&\text{and}&
    (V')^* = Q'V' &= 
    \left[
    \begin{smallmatrix}
         1 &    &        &        &  -\lambda_{1}' \lambda_{N}'  \\
           &  1 &        &        &  -\lambda_{2}' \lambda_{N}' \\
           &    & \ddots &        &   \vdots \\
           &    &        &  1     & -\lambda_{N-1}' \lambda_{N}'  \\
    \end{smallmatrix}
    \right].
\end{align*}
% Furthermore, we notice that for any facet matrix $V$, 
We observe that if $\lambda_N$ is positive, then the list $\lambda_1, \dots, \lambda_{N-1}$ contains $\frac{N}{2}-1$ positive and $\frac{N}{2}$ negative entries. If $\lambda_N$ is negative, then the list $\lambda_1. \dots ,\lambda_{N-1}$ contains $\frac{N}{2}-1$ negative and $\frac{N}{2}$ positive entries. 
It follows that the last columns of both $V^*$ and $(V')^*$ 
have exactly $\frac{N}{2}$ entries equal $1$ and $\frac{N}{2}-1$ entries equal $-1$. 
Therefore $V^*$ and $(V')^*$ 
% Therefore, given two facets $F$ and $F'$ of the adjacency polytope $\nabla_{C_N}$, we know that their reduced facet matrices $V$ 
are equal up to a permutation of the entries in the last column. 
In other words, there exist permutations matrices $L$ and $P$
of sizes $(N-1) \times (N-1)$ and $N \times N$ respectively 
such that 
\begin{equation}
    L V^* P=(V')^*.
\end{equation}
% where  $L$ is an $(N-1)\times (N-1)$ and $P$ is $N \times N$ permutation matrices. Finally, if we 
Let $U = (Q')^{-1} L Q$, which is unimodular since $Q'$, $L$, and $Q$ are all unimodular, 
then we have $U V P = V'$, as desired. 
\end{proof}
% Considering all the faces of $\nabla_{C_N}$, we let $W$ \todo{Renamed from U} be the matrix whose columns are the vertices of some face of $\nabla_{C_N}$. We observe that $W$ 
% is not a full column rank iff $N$ is even and $W=V$. \todo{If we need this here, will have to include some motivation referring to the root count part}

%==============================================================
\subsection{Face and facet subsystems}

Faces of the adjacensy polytope $\nabla_{C_N}$ %reviewed in Section \ref{sec: Adjacency polytope}
%above are import in our discussion as they% 
give rise to face and facet subsystems
that form the foundation of our root counting argument.
Let $F$ be a positive-dimensional face of $\nabla_{C_N}$, 
then the \emph{face subsystem} induced by $F$ is the system given by
\begin{equation}
    f_{F,k}^R =
    c_k - 
    \sum_{(\bolde_i - \bolde_j) \in F} (a_{ijk} - a_{jik}) \, \frac{x_i}{x_j}
    \quad\text{ for } k=1,\dots,n,
\end{equation}
which consists of all the terms in the algebraic Kuramoto system
with exponents vectors in $F$ together with the constant terms.
Using the compact vector exponent notation, 
we can write the system as
\begin{equation}
    \boldf_F^R =
    \boldc - a \sum_{(\bolde_i - \bolde_j) \in F} 
    (R_i - R_j) \, (\boldx^{(\bolde_i - \bolde_j)})^\top
    \label{eq: def. facet subsystem}
\end{equation}
where $R_k$ is the $k$-th column of $R$ if $k \ne 0$ 
and the zero vector otherwise.
If $F$ is a facet, we call $\boldf_F^R$ a \emph{facet subsystem}. Facet subsystems correspond to facet subnetworks investigated in Ref.~\cite{Chen2019Directed}.

\section{Counting roots}
\label{sec: counting roots}
The main goal here is to provide a refined generic root count
in the special case of uniform coupling,
which will be the answer for both \Cref{prb:one} and \Cref{prb:two}. This will be done via analysis of the roots of the much simpler face subsystems described above. Throughout this section, we assume the choices of the natural frequencies $\omega_i$'s
and the complex coupling coefficient $a$ to be generic. 
This can be interpreted in terms of Zariski topology  within the space of all possible coefficients. 
We say that a property holds for a generic choice of coefficients, 
if there is a nonempty Zariski open set of the coefficients 
for which this property holds. 
From a probabilistic point of view, ``genericity" implies that 
if $a$ and $\boldomega$ are selected at random, 
then the property holds with \emph{probability one}.
Now, following a standard ``generic smoothness'' argument,
we show that the solution set of such a face subsystem
consists of nonsingular points.

\begin{lemma}
    Let $C_N$ be a cycle graph of $N$ nodes, 
    and let $F$ be a face of $\nabla_{C_N}$.
    For generic choices of $\boldc \in \C^n$ and $a \in \C^*$,
    the complex zero set of the face subsystem $\boldf_F^R$ 
    is either empty or consists of nonsingular isolated points.
\end{lemma}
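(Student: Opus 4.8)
The plan is to recognize the face subsystem as a fiber of a single morphism that does not depend on the translation parameters $\boldc$, and then to invoke generic smoothness (the characteristic-zero analogue of Sard's theorem). First I would separate the constant terms from the monomial terms by setting
\[
    \mathbf{g}(\boldx) \defeq a \sum_{(\bolde_i - \bolde_j) \in F} (R_i - R_j)\,\boldx^{\bolde_i - \bolde_j},
\]
a morphism $\mathbf{g} \colon (\C^*)^n \to \C^n$ whose components are Laurent polynomials. Then $\boldf_F^R(\boldx) = \boldc - \mathbf{g}(\boldx)$, so the complex zero set of $\boldf_F^R$ is exactly the fiber $\mathbf{g}^{-1}(\boldc)$. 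The key structural observation is that $\boldc$ enters only as an additive translation in the target, so genericity of $\boldc$ is precisely genericity of the value at which we take the fiber of the fixed map $\mathbf{g}$.

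Next I would record the Jacobian identity: because $\boldc$ is constant, the Jacobian of $\boldf_F^R$ with respect to $\boldx$ equals $-D\mathbf{g}(\boldx)$. Hence a zero $\boldx^\star$ of $\boldf_F^R$ is a nonsingular isolated solution exactly when $D\mathbf{g}(\boldx^\star)$ has full rank $n$, that is, exactly when $\boldx^\star$ lies in a fiber over a regular value of $\mathbf{g}$.

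The heart of the argument is then a dichotomy according to whether $\mathbf{g}$ is dominant; since $(\C^*)^n$ and $\C^n$ are irreducible varieties over the characteristic-zero field $\C$, generic smoothness applies. If $\mathbf{g}$ is dominant, there is a nonempty Zariski-open set $U \subseteq \C^n$ of regular values, and for every $\boldc \in U$ the fiber $\mathbf{g}^{-1}(\boldc)$ is smooth of dimension $0$; by the Jacobian identity this means the zero set of $\boldf_F^R$ consists of nonsingular isolated points (possibly none). If instead $\mathbf{g}$ is not dominant, the closure of its image is a proper closed subvariety $Z \subsetneq \C^n$, and for every $\boldc \notin Z$ the fiber, hence the zero set, is empty. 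In either case the asserted generic behavior holds; moreover $a \in \C^*$ merely rescales the target of $\mathbf{g}$, and $\boldc = R\boldomega$ sweeps out a Zariski-dense set as $\boldomega$ varies, so the genericity matches the hypothesis.

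I expect the one point requiring care — more a matter of bookkeeping than a real obstacle — to be verifying that the admissible $\boldc$ form a nonempty Zariski-open set. This is exactly what generic smoothness supplies in characteristic zero: in the dominant case it is the complement of the critical image, and in the non-dominant case it is the complement of $\overline{\operatorname{im}\mathbf{g}}$. No fine analysis of the individual face $F$ is needed, since the reduction to the fibers of $\mathbf{g}$ already absorbs the combinatorial structure of the subsystem.
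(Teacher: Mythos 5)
Your proof is correct, but it reaches the conclusion by a different mechanism than the paper. The paper treats $\boldf_F^R$ as a generic member of the linear system spanned by $\{1\} \cup \{x_i x_j^{-1}\}_{(\bolde_i - \bolde_j) \in F}$: it observes that this system has empty base locus in $(\C^*)^n$ (the constant $1$ never vanishes), invokes Bertini's theorem to obtain a Zariski-open set of good coefficients in the full coefficient space, and then pulls that open set back to the $(\boldc, a)$-parameters through the nonsingular linear map built from $R$. You instead exploit the fact that $\boldc$ enters purely as a translation: you freeze the monomial part $\mathbf{g}$, identify the zero set with the fiber $\mathbf{g}^{-1}(\boldc)$, and let generic smoothness (Sard in characteristic zero), applied once to the fixed morphism $\mathbf{g}\colon (\C^*)^n \to \C^n$ together with the dominant/non-dominant dichotomy, do all the work. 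The two routes use the constant term in dual ways --- the paper for base-point-freeness of the linear system, you as the moving fiber parameter --- and yours is more self-contained (it essentially reproves the special case of Bertini that the paper cites) and slightly stronger: it shows genericity in $\boldc$ alone suffices for every fixed $a \in \C^*$ and nonsingular $R$, with the joint $(\boldc, a)$-genericity of the lemma recovered by your rescaling remark, since the good pairs form the preimage of a nonempty open set under the morphism $(\boldc, a) \mapsto \boldc / a$. What the paper's formulation buys in exchange is that it transfers unchanged to situations where the coefficients do not decouple into a pure translation plus a fixed map.
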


This statement follows directly from the properties of a generic member
of a linear system of divisors away from the base locus.
Here, we include a short proof for completeness.

\begin{proof}
    The face subsystem $\boldf_F^R$ is a linear combination of
    the system of Laurent polynomials
    \[
        \{ 1 \} \cup 
        \{ x_i x_j^{-1} \}_{(\bolde_i - \bolde_j) \in F}
    \]
    with coefficients that are the images of 
    $\boldc \in \C^n$ and $aR \in \C^n \times \C^n$ 
    under a nonsingular linear transformation.
    Note that the base locus of this system in $(\C^*)^n$ is empty.
    Then by Bertini's theorem, there exists a Zariski open set $U$
    of $\C^n \times \C^n \times \C^n$ such that $(\boldc,aR) \in U$ 
    implies that the zero set of $\boldf_F^R$ in $(\C^*)^n$ 
    is either empty or 0-dimensional and nonsingular.
    
    Since we require $a \in \C^*$ and $R$ nonsingular, by assumption,
    the inverse image of $U$ in $\C^* \times \C^n \times \C^n$
    remains Zariski open.
    Therefore, for generic choices of $\boldc \in \C^n$ and $a \in \C^*$,
    the zero set of the facet subsystem $\boldf_F^R$ consists of nonsingular isolated points.
\end{proof}

By considering the entire polytope $\nabla_{C_N}$ as a face,
the above lemma implies that the generic solution set to
the algebraic Kuramoto equation consists of nonsingular isolated points.

\begin{corollary}\label{cor:f-count}
    For generic choices of $\boldc \in \C^n$ and $a \in \C^*$,
    the complex zero set of $\boldf$ consists of nonsingular isolated points,
    and the total number is a constant which is independent from $\boldc$ and $a$.
\end{corollary}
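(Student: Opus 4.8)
The plan is to derive both assertions from the preceding lemma together with an incidence-variety argument that exploits the special way the parameters $\boldomega$ and $a$ enter the system. For the first assertion I would take $F = \nabla_{C_N}$, the full polytope regarded as a face of itself. The associated face subsystem $\boldf_{\nabla_{C_N}}^R$ is then exactly the randomized system $\boldf^R = R\boldf$, which shares its zero set with $\boldf$ because $R$ is nonsingular. The lemma immediately gives that, for generic $(\boldc,a)$, this common zero set in $(\C^*)^n$ is either empty or consists of nonsingular isolated points, establishing the first claim.

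For the constancy of the count the key structural observation is that, for fixed $\boldx \in (\C^*)^n$ and fixed $a \in \C^*$, the equations $f_i(\boldx) = \omega_i - a\,\ell_i(\boldx) = 0$ determine $\boldomega$ uniquely through $\omega_i = a\,\ell_i(\boldx)$. I would therefore form the incidence variety
\[
    Z = \{ (\boldx,\boldomega,a) \in (\C^*)^n \times \C^n \times \C^* : \boldf(\boldx;\boldomega,a) = \boldzero \},
\]
and note that the projection $(\boldx,\boldomega,a) \mapsto (\boldx,a)$ is an isomorphism of $Z$ onto $(\C^*)^n \times \C^*$, with regular inverse $(\boldx,a) \mapsto (\boldx,(a\ell_1(\boldx),\dots,a\ell_n(\boldx)),a)$. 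Hence $Z$ is irreducible of dimension $n+1$, which coincides with the dimension of the parameter space $\mathcal{P} = \C^n \times \C^*$.

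Next I would consider the projection $\pi : Z \to \mathcal{P}$, $(\boldx,\boldomega,a)\mapsto(\boldomega,a)$. Because $Z$ is irreducible with $\dim Z = \dim \mathcal{P}$, the morphism $\pi$ is either dominant and generically finite, in which case a standard degree theorem supplies a Zariski open dense $U \subseteq \mathcal{P}$ over which every fiber consists of exactly $\deg \pi$ distinct geometric points, or it is non-dominant, in which case the fiber is empty over the dense open complement of $\overline{\pi(Z)}$. In both cases the cardinality of $\pi^{-1}(\boldomega,a)$ equals a single constant $N_0$ on a Zariski open dense subset of $\mathcal{P}$; transporting through the linear isomorphism $\boldc = R\boldomega$ then shows this count is independent of the generic choice of $\boldc$ and $a$, as required.

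The step demanding the most care — the main obstacle — is identifying this fiber cardinality with the honest number of \emph{isolated} solutions of $\boldf$. A priori the solution set attached to a parameter could carry positive-dimensional components alongside isolated points, in which case the count of isolated solutions would not match $\deg \pi$. This is precisely what the first assertion rules out: the lemma forces the generic fiber to be $0$-dimensional and reduced, confining all such degeneracies to a proper closed subset of $\mathcal{P}$. Intersecting the open locus where $\pi$ has constant fiber cardinality with the open locus supplied by the lemma yields a single Zariski open dense set on which the solution set is simultaneously finite, nonsingular, and of constant size $N_0$, completing the argument.
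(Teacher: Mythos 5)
Your proposal is correct, and its first half is precisely the paper's own argument: the paper proves the corollary by regarding the whole polytope $\nabla_{C_N}$ as a face of itself, so that the face subsystem is the randomized system $\boldf^R = R\boldf$, which shares its zero set with $\boldf$ because $R$ is nonsingular, and the preceding lemma then yields nonsingularity and isolatedness for generic $(\boldc,a)$. Where you genuinely diverge is on the constancy of the count: the paper gives no explicit argument for that half of the statement, implicitly resting on the principle it invokes in Section~2.2 (via Bertini and intersection theory) that over $\C$ the maximal behavior is the generic behavior. Your incidence-variety argument supplies this missing piece rigorously and exploits a structural feature the paper never uses, namely that $\boldomega$ is a regular function of $(\boldx,a)$ through $\omega_i = a\,\ell_i(\boldx)$, which makes $Z$ isomorphic to $(\C^*)^n \times \C^*$, hence irreducible of the same dimension as the parameter space; the dichotomy dominant-and-generically-finite versus non-dominant then gives a constant fiber cardinality on a dense Zariski open set in either case, and intersecting with the open locus furnished by the lemma correctly identifies that cardinality with the number of isolated, nonsingular solutions (ruling out the a priori possibility of extra positive-dimensional components in special fibers). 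You also handle the needed transfer of genericity between $\boldomega$ and $\boldc = R\boldomega$, which is the one bookkeeping point that could trip up this argument. In short: the paper's route buys brevity by outsourcing constancy to standard theory, while yours buys a self-contained proof whose only external inputs are the lemma and the generic-fiber theorem for morphisms of irreducible varieties in characteristic zero.
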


We establish the root count for $\boldf$ by studying the root count
of individual facet subsystems,
since the complex zeros of $\boldf$
are one-to-one correspondence with complex zeros of
all facet subsystems as show in Ref.~\cite{Chen2019Directed}.
Here, we restate the result in the current context.

\begin{theorem}[Theorem 2,~\cite{Chen2019Directed}]\label{thm:facet-sum}
    For generic choices of $\boldc \in \C^n$ and $a \in \C^*$,
    the complex zeros of the algebraic Kuramoto system $\boldf$ on $C_N$
    with uniform coupling~\eqref{equ:kuramoto-alg}
    are all isolated and nonsingular,
    and the total number is exactly the sum of the complex root count
    of all the facet subsystems $\boldf_F^R$ 
    over all facets $F \in \mathcal{F}(\nabla_{C_N})$.
\end{theorem}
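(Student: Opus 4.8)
The plan is to realize the claimed decomposition as the outcome of a toric (polyhedral) degeneration of $\boldf^R$ adapted to the interior lattice point $\boldzero \in \Z^n$, which is precisely the exponent carried by every constant term $c_k$. The guiding observation is that each facet subsystem $\boldf_F^R$ consists exactly of those monomials of $\boldf^R$ whose exponents lie in the pyramid $\conv(\{\boldzero\} \cup F)$, together with the constant terms; summing root counts over facets should therefore correspond to slicing $\nabla_{C_N}$ into these pyramids and counting the solutions contributed by each slice. First I would record that $\boldzero$ lies in the interior of $\nabla_{C_N}$, which is immediate from the full-dimensional central symmetry noted earlier, so that the pyramids $\{\conv(\{\boldzero\} \cup F) : F \in \mathcal{F}(\nabla_{C_N})\}$ have pairwise disjoint interiors and cover $\nabla_{C_N}$.

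Next I would exhibit this star decomposition as a regular (coherent) subdivision. Lifting $\boldzero$ to height $0$ and every vertex $\pm(\bolde_i - \bolde_j)$ to a common positive height, the lower hull of the lifted configuration has facets each spanned by the apex $(\boldzero,0)$ together with a facet of $\nabla_{C_N}$, whose projections are exactly the pyramids above. Because the lift is regular, it induces a polyhedral homotopy $\boldf^R_t$ weighting each non-constant monomial by $t$ and leaving the constant terms unscaled, with $\boldf^R_1 = \boldf^R$. For the cell over a facet $F$, the rescaling $x_i \mapsto y_i\, t^{\beta_i}$ dictated by the inner normal of the associated lower facet places the constant terms and the $F$-terms at the same, lowest, order in $t$, while every remaining monomial acquires strictly higher order and vanishes as $t \to 0$; the surviving limit system is precisely the facet subsystem $\boldf_F^R$.

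The count then follows from the completeness of the polyhedral homotopy: for generic coefficients, every isolated solution of $\boldf^R$ in $(\C^*)^n$ is the endpoint of exactly one smooth solution path emanating, as $t \to 0$, from a solution of one of the cell systems, and conversely each solution of each $\boldf_F^R$ launches exactly one such path. This yields a bijection between the complex zeros of $\boldf^R$ (equivalently of $\boldf$, since $R$ is nonsingular) and the disjoint union over facets $F$ of the complex zeros of $\boldf_F^R$, giving the asserted equality of counts; the isolation and nonsingularity statements are then supplied by \Cref{cor:f-count} and by the generic-smoothness lemma preceding it, the latter applied cellwise.

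The main obstacle I anticipate is justifying the completeness of the homotopy rather than merely its existence: one must rule out solution paths escaping to the boundary $\partial(\C^*)^n$ or diverging without being captured by a cell, and exclude spurious limits at $t=0$. Two facts make this tractable. Because $\boldzero$ is interior, the constant term survives in every cell system, so each facet subsystem has empty base locus in $(\C^*)^n$, which is what prevents paths from collapsing onto coordinate subspaces; and the generic-smoothness lemma guarantees that each $\boldf_F^R$ has only nonsingular isolated zeros, so every cell contributes its full generic quota with nothing hidden at toric infinity. A secondary subtlety is that, for even $N$, the facets of $\nabla_{C_N}$ are not simplices, so the pyramid cells are not unimodular and their cell systems are not binomial; I therefore cannot invoke the elementary simplicial form of the polyhedral homotopy and must instead appeal to its general version, valid for arbitrary regular subdivisions, in which each cell contributes the number of isolated $(\C^*)^n$-solutions of its (possibly non-binomial) initial system.
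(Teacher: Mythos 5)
You should first be aware that the paper contains no internal proof of \Cref{thm:facet-sum}: it is imported verbatim from Theorem 2 of Ref.~\cite{Chen2019Directed}, so the only comparison available is with that cited argument. Your architecture --- the star subdivision of $\nabla_{C_N}$ induced by lifting the interior point $\boldzero$ to height $0$ and all vertices to a common positive height, turned into a degeneration that scales every non-constant term by $t$ (equivalently, replaces $a$ by $ta$) --- is the natural route and is consistent in spirit with the cited work and with the tools this paper uses elsewhere. The problem is your completeness step, and the gap there is genuine.

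The two facts you invoke (the constant term survives in every cell system; the generic-smoothness lemma makes the zeros of each $\boldf_F^R$ isolated and nonsingular) prove only one inequality: by Hensel lifting, each zero of each facet subsystem launches a unique solution path with order vector equal to that facet-pyramid normal, so the facet sum is a \emph{lower} bound for the root count of $\boldf^R$. They do nothing to exclude solution paths (equivalently, Puiseux-series zeros of the deformed system) whose order vector $\beta$ is \emph{not} a facet-pyramid normal; such paths correspond to initial systems of $\boldf^R$ supported either on a set $G \times \{1\}$, where $G$ is a proper face of $\nabla_{C_N}$ and the constant term is absent, or on a pyramid over a face of dimension less than $n-1$. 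If any of those initial systems had a zero in $(\C^*)^n$, the true count would strictly exceed your facet sum and the claimed bijection would fail. Nor can you outsource this to ``the general version of the polyhedral homotopy for arbitrary regular subdivisions'': the completeness statements of such theorems are proved under genericity of \emph{all} coefficients, which is exactly what fails here --- the uniform-coupling relations among the coefficients are the entire subject of the paper, and \Cref{prop.div.4} shows that for $4 \mid N$ certain initial systems of the facet subsystems really do acquire toric zeros, precisely the phenomenon that coefficient genericity would rule out. What your proof is missing is a hands-on verification of Bernshtein's condition in every non-cell direction: (i) for initial systems omitting the constant term, a rank argument --- writing $W$ for the matrix whose columns are the supporting lattice points, any proper subset of a facet's vertices is linearly independent (for even $N$ because the kernel of the facet matrix is spanned by $(\lambda_1,\dots,\lambda_N)$ with no zero entry), so $W(\boldx^W)^\top = \boldzero$ forces $\boldx^W = \boldzero$, impossible on the torus; (ii) for pyramids over lower-dimensional faces, a dimension count showing the relevant monomial image has dimension less than $n$, so a generic $\boldc$ renders the system inconsistent. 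These are exactly the arguments appearing in the last two paragraphs of the paper's proof of \Cref{prop.div.4}, applied there to facet subsystems; your argument needs their analogues for $\boldf^R$ itself, plus the (easy, via \Cref{cor:f-count}, since the deformation merely rescales $a$) transfer of the count from generic small $t$ back to $t=1$.
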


The root counting question (\Cref{prb:one} and \Cref{prb:two})
is now reduced to computing the root count of each facet subsystem.
The root count for facet subsystems associated with a cycle network
with \emph{non-uniform coupling} is established in Ref.~\cite{ChenDavisMehta2018Counting}.
However, under the additional condition of \emph{uniform coupling}
the generic root count could be strictly less.
We analyze this gap using Bernshtein's second theorem, which states that 
the actual $\C^*$-root count for a system of $n$ polynomial in $n$ variables
is strictly less than the BKK bound if and only if there is an initial system
that has a nontrivial solution in $(\C^*)^n$.

\begin{theorem}[D. Bernshtein, \cite{Bernshtein1975Number}, Theorem B]
Let $\mathbf{f} = (f_1,\dots,f_n)^\top$ be a Laurent polynomial system in $n$ complex variables 
with Newton polytopes $P_1,\dots,P_n$. 
If an initial system $\init_{\boldalpha} \mathbf{f}$ has no roots in $(\C^*)^n$ 
for any $\boldalpha \neq \boldzero$, 
then all roots of $\mathbf{f}$ in $(\C^*)^n$ are isolated 
and their number, counting multiplicity, equals the mixed volume of $P_1, \dots, P_n$.
If an initial system $\init_{\boldalpha} \mathbf{f}$ has a root in $(\C^*)^n$ 
for some $\boldalpha \neq \boldzero$, 
then the number of isolated roots of the system $\mathbf{f}$ in $(\C^*)^n$ 
counted according to multiplicity, 
is strictly smaller than the mixed volume of $P_1, \dots, P_n$, 
given this mixed volume is nonzero.
\end{theorem}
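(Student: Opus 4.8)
The plan is to realize the statement as an intersection computation on a toric compactification of $(\C^*)^n$ and then to track exactly how the torus root count degrades when the common zero locus of $\boldf$ reaches the toric boundary. First I would pass to the common refinement $\Sigma$ of the inner normal fans of $P_1,\dots,P_n$ (refining further if needed so that the associated toric variety $X_\Sigma \supset (\C^*)^n$ is smooth and projective). Each $f_i$, regarded as a section of the line bundle attached to $P_i$, extends to a section on $X_\Sigma$, and the first Bernshtein theorem already quoted in \Cref{sec:Review and problem statement} identifies the total intersection number of these sections with $\MVol(P_1,\dots,P_n)$. Since the isolated $\C^*$-roots of $\boldf$ are precisely the intersection points lying in the open torus, the entire statement reduces to controlling which intersection points escape to the invariant boundary divisors $D_{\boldalpha}$ indexed by the rays $\boldalpha$ of $\Sigma$.

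The bridge to initial systems is the observation that, after dividing out the minimal monomial weight, the restriction of $f_i$ to the invariant divisor $D_{\boldalpha}$ is exactly $\init_{\boldalpha} f_i$. Because initial forms are homogeneous with respect to the one-parameter subgroup generated by $\boldalpha$, a common zero of $\init_{\boldalpha}\boldf$ on the $(n-1)$-torus of $D_{\boldalpha}$ lifts along that subgroup to a zero of $\init_{\boldalpha}\boldf$ in $(\C^*)^n$, and conversely every torus zero of $\init_{\boldalpha}\boldf$ descends to one on $D_{\boldalpha}$. Thus the closure of the zero set of $\boldf$ in $X_\Sigma$ meets the boundary if and only if some initial system $\init_{\boldalpha}\boldf$ has a root in $(\C^*)^n$.

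With this dictionary I would argue the two assertions separately. For the first, the hypothesis that no $\init_{\boldalpha}\boldf$ has a torus root forces the intersection to avoid every $D_{\boldalpha}$; the intersection is then proper and supported entirely inside $(\C^*)^n$, so all $\MVol(P_1,\dots,P_n)$ units of intersection multiplicity are carried by isolated torus points, yielding the exact equality. For the second, a torus root of some $\init_{\boldalpha}\boldf$ plants at least one intersection point on the boundary; to convert this into a \emph{strict} drop I would deform along a coefficient homotopy joining a generic system (with $\MVol$ torus solutions, by the first Bernshtein theorem) to $\boldf$ and follow the solution paths. A path leaves $(\C^*)^n$ precisely when its Puiseux expansion has a nonzero exponent vector $\boldalpha$, in which case the leading coefficients solve $\init_{\boldalpha}\boldf = \boldzero$ on the torus; hence a boundary root corresponds to a genuinely diverging path, fewer than $\MVol$ paths terminate at torus points, and, since $\MVol \ne 0$, the torus count with multiplicity is strictly below the mixed volume.

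The main obstacle is exactly this strict inequality: I must guarantee that a boundary solution truly removes intersection multiplicity from the torus rather than merely contributing spurious excess intersection along a positive-dimensional boundary stratum. I would address this by refining $\Sigma$ so the relevant boundary intersection becomes proper, together with the Puiseux construction above, which manufactures an honest diverging branch whose leading term is the prescribed initial root. Establishing convergence of that branch and bookkeeping multiplicities correctly when several homotopy paths coalesce at the same limit is the technical heart of the argument, and is where I expect the real work to lie.
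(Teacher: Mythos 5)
First, a point of comparison that matters here: the paper offers \emph{no proof} of this statement at all. It is imported verbatim, with attribution, as Theorem~B of Bernshtein's 1975 paper \cite{Bernshtein1975Number}, and is used as a black box in \Cref{sec: counting roots}. So your proposal can only be measured against the classical proofs, not against anything in the paper. Measured that way, your treatment of the \emph{first} assertion follows the standard toric route and is essentially sound: compactify $(\C^*)^n$ to a smooth projective toric variety $X_\Sigma$ refining the normal fans of $P_1,\dots,P_n$, identify the total intersection number of the extended sections with the mixed volume, and translate escape of zeros to the boundary into torus roots of initial systems. Two standard repairs are needed: the dictionary must be stated for all cones of $\Sigma$, not only rays, since a zero of the extension of $\boldf$ may lie on an orbit of codimension greater than one (such a point corresponds to a torus root of $\init_{\boldalpha}\boldf$ for $\boldalpha$ in the relative interior of the corresponding cone; the hypothesis quantifies over all $\boldalpha\ne\boldzero$, so this costs nothing); and isolatedness should be argued explicitly, e.g.\ a positive-dimensional component of the zero set avoiding the boundary would be a complete subvariety of the affine variety $(\C^*)^n$, which is impossible.

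The genuine gap is in the \emph{second} assertion, exactly where you flag it, and it is not a bookkeeping detail — it is the entire content of that half of the theorem. Your key sentence, that ``a boundary root corresponds to a genuinely diverging path,'' asserts the converse of what the Puiseux analysis actually yields. Tracking a coefficient homotopy $\boldf_\epsilon \to \boldf$ gives: every path that diverges has a leading exponent vector $\boldalpha\ne\boldzero$ whose leading coefficients form a torus root of $\init_{\boldalpha}\boldf$. That is the implication (diverging path $\Rightarrow$ initial root); the theorem needs (initial root $\Rightarrow$ at least one unit of intersection multiplicity leaves the torus). These are not equivalent: accumulation points of solution paths do lie in the zero scheme of the limit sections, but a point — or a positive-dimensional component — of that limit zero scheme need not attract any path. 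Concretely, if the boundary intersection is excess (positive-dimensional), it could a priori carry zero contribution to the intersection number, in which case all $\MVol(P_1,\dots,P_n)$ paths would still terminate at isolated torus roots and no strict drop would occur. Refining $\Sigma$ cannot repair this, because excess intersection is a property of the sections, not of the fan. What closes the gap in the literature is a positivity input that your sketch does not supply: either refined intersection theory showing that the distinguished varieties meeting the boundary absorb a strictly positive amount of the total intersection number (using that the line bundles attached to the $P_i$ are globally generated), or Bernshtein's original analysis, which works inductively with the restriction of the system to the boundary divisor, where it literally becomes the initial system. Until an argument of this type is supplied, the strict inequality — and hence the half of the theorem that the paper actually relies on to detect the root-count drop when $4 \mid N$ — remains unproved in your proposal.
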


The following theorem provides the condition under which 
an initial system we are interest in has a nontrivial solution.

\begin{theorem} \label{prop.div.4}
    Given a facet subsystem $\boldf_F^R$,
    an initial system $\init_{\boldalpha} \boldf_F^R$ for $\boldalpha \ne \boldzero$ 
    has a zero in $(\C^*)^n$ if and only if $N$ is divisible by 4 and 
    $\boldalpha$ is the inner normal vector of %a facet 
    the facet $F$ of $\nabla_{C_N}$.
\end{theorem}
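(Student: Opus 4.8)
The plan is to reduce the solvability of $\init_{\boldalpha}\boldf_F^R = \boldzero$ in $(\C^*)^n$ to a purely linear-algebraic condition on the facet matrix $V$, and then to resolve that condition by locating $\boldalpha$ among the faces of the pyramid $P \defeq \conv(\{\boldzero\}\cup F)$, which is exactly $\newt(\boldf_F^R)$. First I would record the reduction. For a nonzero $\boldalpha$, let $J \subseteq \{1,\dots,N\}$ index the vertices $\boldv_j$ of $F$ (the columns of $V$) on which $\inner{\boldalpha}{\cdot}$ attains its minimum over the support $\{\boldzero\}\cup\mathrm{Vert}(F)$, and write $\boldy_J = (\boldx^{\boldv_j})_{j\in J} \in (\C^*)^{|J|}$. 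Since the coefficient of $\boldx^{\boldv_j}$ in $\boldf_F^R$ is the column $-aR\boldv_j$, the monomial block of the coefficient matrix is $-aRV$; using that $R$ is nonsingular and $R^{-1}\boldc = \boldomega$, the system $\init_{\boldalpha}\boldf_F^R = \boldzero$ becomes
\[
    V_J\,\boldy_J = \tfrac{1}{a}\boldomega \;\;(\text{if } \boldzero \text{ is a minimizer}) \qquad\text{or}\qquad V_J\,\boldy_J = \boldzero \;\;(\text{otherwise}),
\]
where $V_J$ is the submatrix of $V$ with columns indexed by $J$.

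Next I would split on whether the selected face $Q$ of $P$ contains the apex $\boldzero$. If $\boldzero$ is a minimizer, the minimum value is $\inner{\boldalpha}{\boldzero}=0$, so every selected vertex lies on the linear hyperplane $\boldalpha^{\perp}$; hence $\mathrm{rank}\,V_J \le n-1$ and its column space is a proper subspace of $\C^n$. Because $\boldomega$ is generic and there are only finitely many faces to exclude, $\tfrac{1}{a}\boldomega$ avoids all these subspaces, so $V_J\boldy_J = \tfrac{1}{a}\boldomega$ has no solution. If $\boldzero$ is not a minimizer, then $Q$ is a face of $P$ missing the apex, which for a pyramid is exactly a face of the base $F$, and the system is the homogeneous $V_J\boldy_J = \boldzero$. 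When $Q \subsetneq F$ is proper, I would invoke the decomposition $V = B\Lambda$ from the proof of \Cref{Lemma: reduction matrix Q}: since $\ker B = \mathrm{span}\{\mathbf{1}\}$ we get $\ker V = \C\boldlambda$ with $\boldlambda = (\lambda_1,\dots,\lambda_N)$ having all entries $\pm 1$; as no nonzero element of $\C\boldlambda$ is supported on a proper subset $J$, the matrix $V_J$ has trivial kernel, forcing $\boldy_J = \boldzero$, impossible in $(\C^*)^{|J|}$.

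This leaves the single case $Q = F$, which occurs precisely when $\boldalpha$ is a positive multiple of the inner normal of $F$; here the constant term drops out and we must solve $V\boldy = \boldzero$ with $\boldy = (\boldx^{\boldv_j})_{j=1}^{N}\in(\C^*)^N$. For odd $N$, $V$ is square and unimodular, so $\ker V = \{\boldzero\}$ and no solution exists, consistent with $4\nmid N$. For even $N$, $\ker V = \C\boldlambda$, so a solution requires $\boldy = t\boldlambda$ for some $t\in\C^*$, i.e.\ $\boldx^{\boldv_j} = t\lambda_j$ for all $j$. The crux is to decide when such an $\boldx$ exists: I would show that the image of the monomial map $\boldx \mapsto (\boldx^{\boldv_j})_j$ is the subtorus $\{\boldy : \prod_j y_j^{\lambda_j} = 1\}$ (the lattice of relations among the $\boldv_j$ being exactly $\Z\boldlambda$, with $\boldlambda$ primitive), and then evaluate the defining character on the candidate $t\boldlambda$: using $\sum_j \lambda_j = 0$ and that exactly $N/2$ of the $\lambda_j$ equal $-1$,
\[
    \prod_{j} (t\lambda_j)^{\lambda_j} = t^{\sum_j \lambda_j}\prod_j \lambda_j^{\lambda_j} = (-1)^{N/2}.
\]
Hence $t\boldlambda$ lies in the image iff $(-1)^{N/2} = 1$, i.e.\ iff $N\equiv 0 \pmod 4$, in which case any $t$ yields a genuine $\boldx$, and otherwise none exists. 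Combining the three cases gives the stated equivalence.

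The main obstacle is this final step: correctly identifying the image of the monomial map as the hypersurface $\{\prod_j y_j^{\lambda_j}=1\}$ and extracting the sign $(-1)^{N/2}$, since this is exactly where the parity of $N/2$ — hence divisibility by $4$ — enters. Confirming that this subtorus is the \emph{full} image rather than merely a superset rests on the primitivity of $\boldlambda$ together with a dimension count; the remaining cases are comparatively routine linear algebra over the facet matrix, with genericity of $\boldomega$ doing the work whenever the apex is selected.
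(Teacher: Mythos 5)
Your proof is correct, and in two places it takes a genuinely different route from the paper's. The skeleton is the same: both arguments classify the initial systems $\init_{\boldalpha}\boldf_F^R$ by the faces of $\newt(\boldf_F^R)$, the pyramid with apex $\boldzero$ over $F$, and show that only the direction selecting all of $F$ can yield a toral zero, and then only when $4 \mid N$. The differences are these. (i) For faces containing the apex (constant term present), you argue directly that the selected columns of $V$ lie in the hyperplane $\boldalpha^{\perp}$, so the column space of $V_J$ is a proper subspace of $\C^n$ which the generic right-hand side $\tfrac{1}{a}\boldomega$ avoids (finitely many faces, finitely many subspaces); the paper instead invokes a generalized Sard argument together with the BKK bound for the lower-dimensional Newton polytope. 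Your version is more elementary and self-contained. (ii) For the crucial case --- the full facet with $N$ even --- you characterize the image of the monomial map $\boldx \mapsto \boldx^{V}$ as the subtorus $\{\boldy : \prod_j y_j^{\lambda_j} = 1\}$, using that the relation lattice of the columns of $V$ is $\Z\boldlambda$ with $\boldlambda$ primitive, and then evaluate this character on the kernel line $\C\boldlambda$ to get $(-1)^{N/2}$; the paper instead performs the unimodular substitution $\boldx = \boldy^{Q}$ from \Cref{Lemma: reduction matrix Q} to pass to the reduced facet matrix $V^*$, exhibits the explicit solution $\boldh$ when $4 \mid N$, and derives the contradiction $y_1^{h_1}\cdots y_n^{h_n} = -\,y_1^{h_1}\cdots y_n^{h_n}$ when $4 \nmid N$. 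Both routes hinge on the same parity computation, but yours requires knowing that the image of a homomorphism of algebraic tori is a closed subtorus (equivalently, a Smith-normal-form argument plus primitivity of $\boldlambda$), which you correctly flag as the main obstacle; the paper's computational route buys, in addition, the explicit description of the solution set as $\{\lambda\boldh : \lambda \in \C^*\}$, which is reused in \Cref{cor: number of subsystem solutions div. by 4} to identify the unique zero at infinity and obtain the count $\tfrac{N}{2}-1$. Two minor points: your face notation clashes with the paper's use of $Q$ for the facet reduction matrix, and in the proper-subface case your appeal to $V = B\Lambda$ and $\ker V = \C\boldlambda$ is literally valid only for even $N$ --- for odd $N$ the facet matrix is square and unimodular, so $V_J$ trivially has full column rank and your conclusion stands; this deserves a sentence but is not a gap.
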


\begin{proof}
We shall first consider an initial system induced by a facet $F$ of $\nabla_{C_N}$
listed in \eqref{def_oddfacet} and \eqref{def_evenfacet}.
Let $\boldalpha$ be an inner normal vector of $F$ in $\nabla_{C_N}$,
then the induced initial form is 
\begin{equation}
    \init_{\alpha} \boldf_F^R =
    - a \sum_{(\bolde_i - \bolde_j) \in F} 
    (R_i - R_j) \, (\boldx^{(\bolde_i - \bolde_j)})^\top
\end{equation}
%=============================================================
% \begin{equation}
%     \operatorname{init}_{\alpha} f_k^R(\boldx) =
%     -\sum_{(\bolde_i-\bolde_j) \in V_F} (a_{ijk}^R-a_{jik}^R) \, 
%     \boldx^{\bolde_i-\bolde_j}
%     \quad \text{ for } k=1,\dots,n.
% \end{equation}
% Recall that $\{a_{ijk}^R\}$ are the coefficients resulted from the
% randomization of $F_G$ with nonsingular matrix $R$.
%=============================================================
which is equivalent to the system
\begin{equation} \label{equ: unmixed init.system}
    -\frac{1}{a}R^{-1} \operatorname{init}_{\alpha} \boldf_F^R =
    \sum_{(\bolde_i - \bolde_j) \in F} 
    (\bolde_i - \bolde_j) \, (\boldx^{(\bolde_i - \bolde_j)})^\top
    = V \, (\boldx^{V})^\top,
\end{equation}
where $V$ is the facet matrix.

(Odd N) If $N$ is odd, then with the biholomorphic change of variables $\boldy = \boldx^{V}$, the system \eqref{equ: unmixed init.system} is equivalent to 
\begin{equation}
    V^{-1}V \left((\boldy^{-V})^V)\right)^\top=\boldy^\top,
\end{equation}
as far as their zero sets in $(\C^*)^n$ are concerned.
It is easy to see, however, that the system above does not have solutions in $(\C^*)^n$.

(Even N) If $N$ is even,
 then the reduced facet matrix of $F$, 
\begin{equation*}
    V^* = QV = 
    \begin{bmatrix}
        \bolde_1 & \cdots &\bolde_n & \boldh
    \end{bmatrix},
\end{equation*}
has the last column $\boldh$ with $\frac{N}{2}$ entries equal $1$ and $\frac{N}{2}-1$ entries equal $-1$ 
(See proof of \Cref{prop.unimod.equiv} for details). 
Then via the biholomorphic change of variables $\boldx=\boldy^Q$,
the above system is equivalent to
\begin{equation*}
    QV(((\boldy)^Q)^V)^\top = V^* (\boldy^{V^*})^\top.
\end{equation*}
%as far as their zero sets in $(\C^*)^n$ are concerned.%
That is, the initial system $\init_{\alpha} \boldf_F^R = \boldzero$
has a $\C^*$-solution if and only if
\begin{equation}\label{equ:init*}
    V^* (\boldy^{V^*})^\top = \boldzero
\end{equation}
has a $\C^*$-solution.
We now show this is possible if and only if $N$ is divisible by 4.

Assume $N$ is divisible by 4,
we shall show~\eqref{equ:init*} has a $\C^*$-solution.
In fact, $\boldh$ is a solution. 
Let $\boldy = \boldh^\top = (h_1,\dots,h_n) \in (\C^*)^n$, then
\begin{align}
    V^*(\boldy^{V^*})^\top = V^* \, ((\boldh^\top)^{V^*})^\top = 
    V^* \, 
    \begin{bmatrix}
        h_1    \\ 
        \vdots \\ 
        h_n    \\ 
        (h_1)^{h_1}\cdots(h_n)^{h_n}
    \end{bmatrix}.
    \label{eq:init h}
\end{align}
Recall that $h_i \in \{+1,-1\}$, so $h_i^{h_i}=h_i$ for each $i=1,\dots,n$. 
Since $N$ is divisible by 4, and since $\boldh$ has exactly $\frac{N}{2}-1$ entries equal $-1$ while the rest are 1's 
the last entry in $(\boldh^\top)^{V^*}$ is 
\[
    (h_1)^{h_1}\cdots(h_n)^{h_n} =
    h_1\cdots h_n=(-1)^{\frac{N}{2}-1}
    = -1.
\] 

So, \eqref{eq:init h} becomes
\begin{align}
    V^* \, (\boldh^\top)^{V^*}= \begin{bmatrix} 
    1 &      &  &h_1\\
      &\ddots&  &\vdots\\
      &      &1 &h_n
    \end{bmatrix} 
    \begin{bmatrix}
        h_1    \\ 
        \vdots \\ 
        h_n    \\ 
        -1
    \end{bmatrix} 
    =
    \begin{bmatrix}
        h_1 - h_1 \\ 
        \vdots    \\ 
        h_n - h_n 
    \end{bmatrix}
    =\boldzero. 
    \label{eq:init 4}
\end{align}
That is, $\boldh \in (\C^*)^n$ is a solution to~\eqref{equ:init*}.
In fact, since~\eqref{equ:init*} is homogeneous, 
$\lambda \boldh \in (\C^*)^n$ for any $\lambda \in \C^*$ will also be solution. 
Moreover, for any other solution $\boldy$, equation \eqref{equ:init*} is equivalent to 
\begin{align}
    \begin{bmatrix} 
        1 &      &     \\
          &\ddots&   \\
          &      &1 
    \end{bmatrix} 
    \begin{bmatrix}
        y_1    \\ 
        \vdots \\ 
        y_n    
    \end{bmatrix} 
    =-(y_1)^{h_1}\cdots(y_n)^{h_n} \boldh. 
\end{align}
Hence, all the solutions of 
\eqref{equ:init*} are of the form form $\lambda \boldh \in (\C^*)^n$,  $\lambda \in \C^*$.
Consequently, the initial system $\init_{\alpha} \boldf_F^R$ has a
1-dimensional zero set in $(\C^*)^n$.

Now assume $N$ is even but not divisible by 4. 
Suppose $\boldx \in (\C^*)^n$ is a zero of the initial system 
$\init_{\boldalpha} \boldf_F^R$, then $\boldy = \boldx^{Q^{-1}} \in (\C^*)^n$ 
is zero of \eqref{equ:init*}, i.e., 
\begin{align}
    V^* \, \boldy^{V^*} = 
    \begin{bmatrix} 
        1 &      &  &h_1    \\
          &\ddots&  &\vdots \\
          &      &1 &h_n
    \end{bmatrix} 
    \begin{bmatrix}
        y_1    \\ 
        \vdots \\ 
        y_n    \\ 
        (y_1)^{h_1}\cdots(y_n)^{h_n}
    \end{bmatrix} 
    =\boldzero. 
    \label{eq:init_not4 }
\end{align}
We have $y_i=-h_i (y_1)^{h_1}\cdots(y_n)^{h_n}$, $i=1,\dots , n$, 
which implies 
\begin{equation}
    y_1^{h_1}\cdots y_n^{h_n}=(-1)^{h_1+\dots +h_n} h_1 \cdots h_n \left((y_1)^{h_1}\cdots(y_n)^{h_n}\right)^{h_1+\dots+ h_n}.
\end{equation}
Recall that $h_1+\dots+ h_n=1$ and $h_1 \cdots h_n=(-1)^{\frac{N}{2}-1}$. 
Hence, the equation above is equivalent to
\begin{equation}
    y_1^{h_1} \cdots y_n^{h_n} = 
    (-1)^{\frac{N}{2}} \cdot y_1^{h_1}\cdots y_n^{h_n} = 
    - y_1^{h_1}\cdots y_n^{h_n}
\end{equation}
since $N$ is not divisible by 4.
% However, when $N$ is even and not divisible by 4, 
This equation implies that $y_1^{h_1} \cdots y_n^{h_n} = 0$,
i.e., $y_k = 0$ for at least one $k \in \{1,\dots,n\}$,
contradicting with the assumption that $\boldy = \boldx^{E^{-1}} \in (\C^*)^n$. 
Therefore we can conclude that 
the initial system $\init_{\boldalpha} \boldf_F^R$ has no zeros in $(\C^*)^n$.

We now show all other initial systems of $\boldf_F^R$
have no zeros in $(\C^*)^n$ for any $N$.
Consider an initial system $\init_{\boldalpha} \boldf_F^R$
induced by a nonzero vector $\boldalpha \in \R^n$ for which 
$(\newt(\boldf_F^R))_{\boldalpha} \ne F$.
If $\init_{\boldalpha} \boldf_F^R$ does not involves the constant term,
then this system is equivalent to
\[
    W (\boldx^W)^\top = \boldzero,
\]
where $W$ consists of less than $N-1$ columns of $V$ when $N$ is odd and less than $N$ columns of $V$ when $N$ is even.
Consequently, $W$ has full column rank.
By the transformation via its Moore-Penrose inverse, we get
\[
    \boldzero = W^+ \boldzero = W^+ W(\boldx^W)^\top = \boldx^W
\]
which implies $\boldx \not\in (\C^*)^n$.
    % Note that the set of vertices of $F$ consists of $n+1$ points in $\R^n$
    % that form a circuit --- any $n$ of them will be affinely independent
    % and affinely span the hyperplane that contains $F$.
    % Consequently $\init_{\boldalpha} \boldf_F^R$ involves at most $n-1$ terms
    % of the form $x_i / x_j$.
If $\init_{\boldalpha} \boldf_F^R$ involves the constant term,
by Generalized Sard's theorem, its zero set must be 0-dimensional
for generic choices of the constant terms $\boldc \in \C^n$.
But its Newton polytope is of lower dimension,
so by the Bernshtein-Kushnirenko-Khovanskii Theorem \cite{ Bernshtein1975Number, Khovanskii1978Newton, Kushnirenko1976Polyedres},
its zero set in $(\C^*)^n$ must be empty.
% With the Theorem B in [Bernstein], the number of isolated complex solutions for this network is strictly less than the AP bound.
\end{proof}

This result shows that for $N$ not divisible by 4,
no initial system of a facet subsystem has a nontrivial $\C^*$-solution.
Therefore, the root count for the facet subsystem agrees with the BKK bound.
Combining with the root count results established in Ref.~\cite{ChenDavisMehta2018Counting},
the root count in cases where $N$ is not divisible by 4 can be derived immediately.

\begin{corollary}
    If $N$ is odd,  a facet subsystem $\boldf_F^R$, 
    with generic choices of $\boldc$ and $a$, 
    has a unique $\C^*$-solution.
\end{corollary}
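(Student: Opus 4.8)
The plan is to combine \Cref{prop.div.4} with Bernshtein's second theorem, reducing the claim to a single determinant computation. First I would record that for odd $N$ the facet subsystem $\boldf_F^R$ is \emph{unmixed}: by its definition in \eqref{eq: def. facet subsystem}, each of the $n$ component equations is supported on the common monomial set $\{\boldzero\} \cup \{\,\bolde_i - \bolde_j : (\bolde_i - \bolde_j) \in F\,\}$, with only the coefficients varying with $k$ through the columns $R_i - R_j$. Hence all $n$ Newton polytopes coincide with the pyramid $P = \conv(\{\boldzero\} \cup \operatorname{vert}(F))$ erected over the facet $F$ with apex the origin, which is a genuine vertex of $P$ because $\boldzero$ lies in the interior of the centrally symmetric polytope $\nabla_{C_N}$. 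The BKK bound for $\boldf_F^R$ is therefore the mixed volume $\MVol(P,\dots,P) = n!\,\Vol(P)$, i.e. the normalized volume of $P$.

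Next I would evaluate this normalized volume. For odd $N$ the facet $F$ is a unimodular simplex on $n$ vertices, and \Cref{Lemma: reduction matrix Q} establishes that its facet matrix $V$, whose columns are the vertices of $F$, is square and unimodular. Since $P = \conv\{\boldzero, v_1, \dots, v_n\}$ with the $v_i$ the columns of $V$, its normalized volume is exactly $|\det V| = 1$. Thus the BKK bound of every facet subsystem is $1$ when $N$ is odd.

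Finally I would promote this bound to an exact count. As $N$ is odd it is not divisible by $4$, so \Cref{prop.div.4} guarantees that no initial system $\init_{\boldalpha} \boldf_F^R$ with $\boldalpha \ne \boldzero$ admits a zero in $(\C^*)^n$. By Bernshtein's second theorem the isolated $\C^*$-roots of $\boldf_F^R$, counted with multiplicity, then number exactly the mixed volume, namely $1$; since the total multiplicity is $1$ this root is simple, consistent with the generic nonsingularity furnished by the lemma preceding \Cref{cor:f-count}. Hence for generic $\boldc$ and $a$ the system $\boldf_F^R$ has a unique $\C^*$-solution.

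The step I expect to be delicate is the volume bookkeeping of the second paragraph: I must confirm that adjoining the constant term's exponent $\boldzero$ to $F$ yields a full-dimensional $n$-simplex whose normalized volume is precisely $|\det V|$ and not a proper multiple thereof. This is exactly where the unimodularity of $V$ from \Cref{Lemma: reduction matrix Q} and the interiority of $\boldzero$ in $\nabla_{C_N}$ are indispensable. As a sanity check, summing the value $1$ over the $N\binom{N-1}{(N-1)/2}$ facets of $\nabla_{C_N}$ recovers the adjacency polytope bound $N\binom{N-1}{\lfloor (N-1)/2\rfloor}$ demanded by \Cref{thm:facet-sum}.
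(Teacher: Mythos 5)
Your proposal is correct and takes essentially the same route as the paper: for odd $N$ (hence $N$ not divisible by $4$), \Cref{prop.div.4} rules out $(\C^*)^n$-zeros of every initial system $\init_{\boldalpha}\boldf_F^R$, $\boldalpha \ne \boldzero$, and Bernshtein's theorem then forces the root count, with multiplicity, to equal the BKK bound. The only deviation is that you compute that bound yourself---as the normalized volume $|\det V| = 1$ of the unimodular pyramid $\conv(\{\boldzero\} \cup F)$, using \Cref{Lemma: reduction matrix Q} and the interiority of $\boldzero$ in $\nabla_{C_N}$---whereas the paper imports it from \cite{ChenDavisMehta2018Counting}; your computation is sound and makes the argument self-contained.
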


\begin{corollary}
    If $N$ is even but not divisible by 4, 
    then the number of $\C^*$-solutions to a facet subsystem $\boldf_F^R$,
    with generic choices of $\boldc$ and $a$, 
    is
    $\frac{N}{2}$.
\end{corollary}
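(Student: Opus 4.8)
The plan is to deduce the count from \Cref{prop.div.4} by way of Bernshtein's second theorem, and then to evaluate the resulting mixed volume explicitly. Since $N$ is even but not divisible by $4$, \Cref{prop.div.4} asserts that $\init_{\boldalpha}\boldf_F^R$ has no zero in $(\C^*)^n$ for every $\boldalpha \ne \boldzero$. This is precisely the hypothesis of the first alternative in Bernshtein's Theorem~B, so every $\C^*$-zero of $\boldf_F^R$ is isolated and their number, counted with multiplicity, equals the mixed volume of the Newton polytopes of the components of $\boldf_F^R$ (once I verify this mixed volume is nonzero).

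Next I would simplify this mixed volume. The $k$-th component of $\boldf_F^R$ is supported on $\{\boldzero\}\cup\{\bolde_i-\bolde_j : (\bolde_i-\bolde_j)\in F\}$, the constant term $c_k$ supplying the exponent $\boldzero$; for generic $R$ every such monomial appears with nonzero coefficient. Hence the system is unmixed, each component having the common Newton polytope $P=\conv(\{\boldzero\}\cup F)$, and the mixed volume reduces to the normalized volume of $P$. It therefore suffices to show that the normalized volume of $P$ equals $\tfrac{N}{2}$; this also certifies the nonvanishing required above.

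To compute the normalized volume of $P$ I would reuse the unimodular reduction from the proof of \Cref{prop.div.4}: applying the change of variables $\boldx=\boldy^Q$ replaces the facet matrix $V$ by $V^*=[\,\bolde_1\;\cdots\;\bolde_n\;\boldh\,]$, so $P$ is unimodularly equivalent to $\conv(\{\boldzero,\bolde_1,\dots,\bolde_n,\boldh\})$, where $\boldh$ has $\tfrac{N}{2}$ entries equal to $1$ and $\tfrac{N}{2}-1$ entries equal to $-1$. Because $\sum_i h_i = 1$, the vertex $\boldh$ lies on the hyperplane $\sum_i x_i = 1$ that already contains the simplex $\conv(\{\bolde_1,\dots,\bolde_n\})$; thus $P$ is a pyramid of lattice height $1$ with apex $\boldzero$ over the base $B=\conv(\{\bolde_1,\dots,\bolde_n,\boldh\})$, and its normalized volume equals that of $B$. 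Decomposing $B$ into the standard simplex $\conv(\{\bolde_1,\dots,\bolde_n\})$ together with the pyramids over the facets visible from $\boldh$ --- exactly the $\tfrac{N}{2}-1$ facets $\{x_k=0\}$ with $h_k=-1$ --- and observing that replacing the vertex $\bolde_k$ by $\boldh$ scales the normalized volume by the barycentric coordinate $|h_k|=1$, each piece is unimodular, giving $1+(\tfrac{N}{2}-1)=\tfrac{N}{2}$. Alternatively, this facet volume may be quoted directly from Ref.~\cite{ChenDavisMehta2018Counting}.

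I expect the only real obstacle to be this last volume count rather than the logical assembly. Care is needed to confirm that the visible-facet decomposition tiles $B$ without overlap and that the barycentric rescaling is carried out in the $(n-1)$-dimensional lattice of the hyperplane $\sum_i x_i = 1$, so that the number of unimodular pieces is exactly $\tfrac{N}{2}$ and not off by the lattice index of that hyperplane. Once the normalized volume is pinned down, the corollary follows immediately from \Cref{prop.div.4} and Bernshtein's theorem.
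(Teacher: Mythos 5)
Your proof is correct, and its logical skeleton is exactly the paper's: \Cref{prop.div.4} rules out $\C^*$-zeros of every nontrivial initial system, so the first alternative of Bernshtein's Theorem B forces the $\C^*$-root count of $\boldf_F^R$ to equal its BKK bound. Where you genuinely diverge is in how the value $\frac{N}{2}$ of that bound is obtained: the paper simply quotes it (Proposition 12 of Ref.~\cite{ChenDavisMehta2018Counting}), whereas you derive it from scratch --- noting the system is unmixed with common Newton polytope $\conv(\{\boldzero\}\cup F)$, reducing by the unimodular map $Q$ to a lattice-height-one pyramid over $B=\conv\{\bolde_1,\dots,\bolde_n,\boldh\}$, and tiling $B$ by the standard simplex together with the $\frac{N}{2}-1$ pyramids over its facets visible from $\boldh$, each unimodular because the barycentric coordinates of $\boldh$ are the entries $h_i=\pm 1$. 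That computation is sound: the visible-facet (beneath-beyond) decomposition is a genuine subdivision, and the standard simplex is unimodular with respect to the lattice of the hyperplane $\sum_i x_i=1$, so the lattice-index worry you raise does not materialize. What your route buys is self-containedness --- the corollary no longer leans on an external volume formula, and your reduction via $V^*$ is the same one the paper reuses in \Cref{cor: number of subsystem solutions div. by 4} --- at the cost of a longer argument. One small point to make explicit: Bernshtein's theorem counts roots with multiplicity, so to conclude that the \emph{number} of solutions is $\frac{N}{2}$ you should also invoke the paper's generic-smoothness lemma (for generic $\boldc$ and $a$ the zero set of a face subsystem consists of nonsingular isolated points); this step is equally implicit in the paper's own one-line derivation.
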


For the cases where $N$ is divisible by 4, 
we have identified the unique initial system of a given facet subsystem
that has a nontrivial $\C^*$-solution.
Consequently, the root count, even under the assumption of generic $\boldc$ and $a$,
is strictly less than the BKK bound or the adjacency polytope bound.
We compute the exact root count below.

\begin{corollary} \label{cor: number of subsystem solutions div. by 4}
    If $N$ is divisible by 4, then the number of $\C^*$-solutions 
    to a facet subsystem $\boldf_F^R$ is $\frac{N}{2} - 1$.
\end{corollary}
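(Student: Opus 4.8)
The plan is to make the elimination performed in the proof of \Cref{prop.div.4} fully quantitative. Recall that after the biholomorphic change of variables $\boldx = \boldy^Q$ and the reductions by the nonsingular matrices $-\tfrac{1}{a}R^{-1}$ and $Q$, the facet subsystem $\boldf_F^R = \boldzero$ is equivalent, as far as its zero set in $(\C^*)^n$ is concerned, to
\[
    V^*\,(\boldy^{V^*})^\top + \boldb = \boldzero,
\]
where $V^* = [\,I_n \mid \boldh\,]$ is the reduced facet matrix, $\boldh$ has $\tfrac{N}{2}$ entries equal to $+1$ and $\tfrac{N}{2}-1$ entries equal to $-1$, and $\boldb = Q\bigl(-\tfrac{1}{a}R^{-1}\boldc\bigr)$ is a \emph{generic} vector of $\C^n$, being the image of the generic $\boldc$ under an invertible linear map. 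Writing $m \defeq \boldy^{\boldh} = \prod_{k=1}^{n} y_k^{h_k}$, this system reads $y_k + h_k m + b_k = 0$, i.e. $y_k = -b_k - h_k m$ for each $k$.

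I would then eliminate the $y_k$ by substituting back into the defining relation $m = \prod_k y_k^{h_k}$. Splitting the index set into $P = \{k : h_k = +1\}$ and $M = \{k : h_k = -1\}$, with $|P| = \tfrac{N}{2}$ and $|M| = \tfrac{N}{2}-1$, the single remaining equation is $\phi(m) = 0$, where
\[
    \phi(m) \;=\; m\!\prod_{k\in M}(m - b_k)\;-\;(-1)^{|P|}\!\prod_{k\in P}(m + b_k).
\]
Both products contribute a monomial $m^{N/2}$. \textbf{The heart of the matter} is the observation that, because $N$ is divisible by $4$, the exponent $|P| = \tfrac{N}{2}$ is \emph{even}, so $(-1)^{|P|} = 1$ and the two leading terms cancel. (For $N \equiv 2 \pmod 4$ they would instead reinforce, which is precisely why the count there stays at $\tfrac{N}{2}$.) Expanding one degree further, the coefficient of $m^{N/2-1}$ in $\phi$ equals $-\sum_{k\in M} b_k - \sum_{k\in P} b_k = -\sum_{k=1}^{n} b_k$, which is nonzero for generic $\boldb$. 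Hence $\deg \phi = \tfrac{N}{2} - 1$.

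It then remains to match the $\tfrac{N}{2}-1$ roots of $\phi$ with the $\C^*$-solutions of the facet subsystem. I would verify that the assignment $m \mapsto \boldy$ given by $y_k = -b_k - h_k m$, together with its inverse $\boldy \mapsto m = \prod_k y_k^{h_k}$, are mutually inverse bijections between the roots of $\phi$ and the solutions $\boldy \in (\C^*)^n$: a solution $\boldy \in (\C^*)^n$ forces $m \in \C^*$, while conversely a root $m$ of $\phi$ yields all $y_k \ne 0$ provided $m \notin \{-b_k : k\in P\} \cup \{b_k : k\in M\}$ and $m \ne 0$, conditions all excluded for generic $\boldb$ since each would impose a nongeneric linear relation among the $b_k$ (e.g. $m=0$ would force $\prod_{k\in P} b_k = 0$). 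Since $\boldx = \boldy^Q$ is biholomorphic on $(\C^*)^n$, the facet subsystem has exactly as many $\C^*$-solutions as $\phi$ has roots, namely $\tfrac{N}{2}-1$; that this count is attained without multiplicity follows from the nonsingularity established in the lemma preceding \Cref{cor:f-count}, or equivalently from the generic nonvanishing of the discriminant of $\phi$.

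The step I expect to demand the most care is the \textbf{bijection and genericity bookkeeping} in the last paragraph — ruling out spurious roots of $\phi$ (values where some $y_k$ would vanish) and confirming that distinct roots produce distinct, simple solutions — rather than the degree computation, which is short once the cancellation of the leading terms is in hand.
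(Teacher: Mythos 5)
Your proposal is correct, but it takes a genuinely different route from the paper. The paper's own proof stays inside the toric framework: it quotes the BKK bound $\frac{N}{2}$ for the facet subsystem (Proposition 12 of Ref.~[ChenDavisMehta2018Counting]), then uses \Cref{prop.div.4} to identify the unique initial system with a nontrivial $\C^*$-zero, interprets that zero as a single simple point at infinity of the compactified system, and concludes that the torus count is the BKK bound minus one. You instead eliminate explicitly: after the same reduction to $V^*(\boldy^{V^*})^\top + \boldb = \boldzero$, you solve $y_k = -b_k - h_k m$ and collapse the system to one univariate polynomial $\phi(m)$ of degree $\frac{N}{2}-1$, the drop in degree coming from the cancellation $1 - (-1)^{N/2} = 0$ exactly when $4 \mid N$. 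This buys several things the paper's argument does not: it is self-contained (no reliance on the earlier BKK computation, on homogenization, or on Bernshtein's second theorem), it treats both even cases uniformly (reinforcement instead of cancellation of leading terms recovers the count $\frac{N}{2}$ when $N \equiv 2 \pmod 4$), and it produces the solutions semi-explicitly as roots of $\phi$. What the paper's route buys is brevity given its infrastructure, and a conceptual identification of the ``missing'' root with the toric zero at infinity, which is what connects the deficiency to the intersection-index discussion.

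One step in your write-up deserves to be made explicit rather than left as a flag: your appeal to the lemma preceding \Cref{cor:f-count} to rule out multiple roots of $\phi$ requires the equivalence ``$m_0$ is a multiple root of $\phi$ $\iff$ the corresponding solution $\boldy$ is a singular zero of the system.'' This does hold, and follows from a short computation: the Jacobian of the system $y_k + h_k\,\boldy^{\boldh} + b_k$ is a rank-one update of the identity with determinant $1 + m\sum_{k=1}^{n} y_k^{-1}$, and one checks that
\[
    \phi'(m_0) \;=\; \Bigl(\,\prod_{k \in M} y_k\Bigr)\Bigl(1 + m_0\sum_{k=1}^{n} y_k^{-1}\Bigr),
\]
with $\prod_{k\in M} y_k \ne 0$, so simplicity of the root and nonsingularity of the zero coincide. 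With that sentence added, your genericity bookkeeping (each excluded coincidence $m \in \{0\} \cup \{-b_k : k \in P\} \cup \{b_k : k \in M\}$ forces a nontrivial polynomial relation on $\boldb$, hence is avoided generically) closes the argument completely.
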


\begin{proof}
    Let $V$ be the facet matrix associated with $F$,
    and let $V^*$ and $Q$ be the corresponding reduced facet matrix
    and facet reduction matrix respectively.
    Recall that the facet system is given by
    \begin{equation}
        \boldf_F^R =
        \boldc 
        - a \sum_{(\bolde_i - \bolde_j) \in F} 
        (R_i - R_j) \, (\boldx^{(\bolde_i - \bolde_j)})^\top
        =
        \boldc - a R V (\boldx^V)^\top,
    \end{equation}
    where $V$ is the facet matrix associated with $F$.
    By the root counting result established in Ref.~\cite{ChenDavisMehta2018Counting}, Proposition 12, the BKK bound for this system is $\frac{N}{2}$,
    i.e., this system has at most $\frac{N}{2}$ zeros in $(\C^*)^n$,
    and this upper bound is attainable for generic choices coefficients
    (i.e., all coefficients are chosen generically and independently).
    Here we show that due to the special algebraic relations among the coefficients,
    the actual number of zeros in $(\C^*)^n$ is $\frac{N}{2} - 1$.
    
    % it is therefore equivalent to count number of $\C^*$ solutions to the system.
    Via the unimodular change of variables $\boldx = \boldy^{Q}$,
    the above facet system can be transformed into
    \begin{equation}\label{equ:facet-system-y}
        \boldc - a R V (\boldy^{QV})^\top =
        \boldc - a R V (\boldy^{V^*})^\top.
    \end{equation}
    Since the change of variables preserves the number of zeros in $(\C^*)^n$,
    it is thus sufficient to count the number of zeros of this system instead.
    
    All zeros of~\eqref{equ:facet-system-y} in $(\C^*)^n$ are isolated and simple,
    and under this transformation,
    the only initial system with nontrivial $\C^*$-solution is the initial system
    defined by the vector $\boldalpha = (-1,\dots,-1)$.
    % Recall that $V^* = \left[ \begin{smallmatrix} I_n & \boldh \end{smallmatrix} \right]$.
    % \begin{align*}
    %     c_1' &= y_1 - \boldy^{\boldh} \\
    %     c_i' &= y_1 - y_{N-i+1} \quad\text{for i = 2,\dots,n}
    % \end{align*}
    % The B\'ezout number of \eqref{equ:facet-system-y} is also $\frac{N}{2}$, 
    % coincide with its BKK bound.
    % Since the only nontrivial initial system of~\eqref{equ:facet-system-y}
    % is defined by the vector $(-1,-1,\dots,-1)$,
    Therefore the only zeros outside $(\C^*)^n$ are at infinity, 
    i.e., $\mathbb{CP}^n \setminus \C^n$,
    which we can compute explicitly by considering the homogenization of \eqref{equ:facet-system-y}.
    At infinity, the system is equivalent to 
    \[
        -aRV (\boldy^{V^*})^\top = \boldzero,
    \]
    which is, in turn, is equivalent to
    \[
        V^* (\boldy^{V^*})^\top = \boldzero.
    \]
    This system coincide with the initial system \eqref{equ:init*}
    in the proof of \Cref{prop.div.4},
    which has a unique nonsingular solution in $\mathbb{CP}^n$.
    In other words,~\eqref{equ:facet-system-y} has only one simple zero at infinity.
    Its root count in $(\C^*)^n$ is therefore one less than the BKK bound,
    i.e., the root count is $\frac{N}{2} - 1$.
\end{proof}

\begin{example} (Running example, 4-cycle).
    To illustrate the result of Corollary \ref{cor: number of subsystem solutions div. by 4}, we consider the facet 
    \[
        \conv 
            \left\{
                \left[\begin{smallmatrix} 1\\0\\0 \end{smallmatrix}\right],
                \left[\begin{smallmatrix} \phantom{-}1\\-1\\\phantom{-}0 \end{smallmatrix}\right], 
                \left[\begin{smallmatrix} \phantom{-}0\\-1\\\phantom{-}1 \end{smallmatrix}\right],
                \left[\begin{smallmatrix} 0\\0\\1 \end{smallmatrix} \right]
            \right\}
    \] 
    of the adjacency polytope of a 4-cycle network 
    (the shaded facet in Figure \ref{sec: Adjacency polytope}). 
    With a direct computation, we can verify that the normalized volume of this facet is 2.
    This is the BKK bound of the facet subsystem.
    Therefore, if all coefficients were chosen randomly,
    we expect the facet subsystem to have 2 solutions in $(\C^*)^3$.
    However, due to the special algebraic relations among the coefficients,
    as a result of the uniform-coupling requirement,
    this facet subsystem has only one solution.
    % Via the biholomorphic change of variables $\boldy = \boldx^{Q^{-1}}$,
    % the corresponding facet subsystem \eqref{eq: def. facet subsystem}  is equivalent to     
    % \begin{align*}
    %     % &V (\boldx^V)^\top=\frac{1}{a}\boldomega  \\
    %     % \iff 
    %     % &V^* (\boldx^V)^\top=\frac{1}{a} Q \, \boldomega\\
    %     % \iff 
    %     V^* (\boldy^{V^*})^\top &= \boldb ,
    %     &&\text{where} &
    % %$V=\begin{bmatrix}
    % %         1 & 1 & 0  & 0 \\
    % %         0 & -1 & -1 & 0\\
    % %         0 & 0 & 1 & 1
    % %     \end{bmatrix}$, 
    % % $Q= \begin{bmatrix}
    % %         1 & 1 & 1  \\
    % %         0 & -1 & -1 \\
    % %         0 & 0 & 1 
    % %     \end{bmatrix}$, 
    %     V^* &=    
    %         \begin{bmatrix}
    %         1 & 0 & 0  & 1 \\
    %         0 & 1 & 0 & -1\\
    %         0 & 0 & 1 & 1
    %     \end{bmatrix} 
    %     &&\text{and} &
    %     \boldb &=\frac{1}{a} Q \, \boldomega,
    % \end{align*}
    Indeed, with direct symbolic computation,
    we can compute the unique solution which is given by
    % which produces the following unique solution of the facet subsystem in $\boldx$:
    \begin{align*}
        x_0 &= 1, &
        x_1 &= \frac{1}{a} \,  \frac{(\omega_1+\omega_2+\omega_3)\omega_1}{\omega_1+\omega_3},\\
        x_2 &= \frac{(\omega_1+\omega_2+\omega_3)\omega_3}{\omega_1+\omega_3}, &
        x_3 &= \frac{1}{a} \,  \frac{(\omega_2+\omega_3)(\omega_1+\omega_2+\omega_3)}{\omega_1+2\omega_2+\omega_3}.
    \end{align*}
    The same argument can be applied to all 6 facet subsystems
    (corresponding to the 6 facets listed in \eqref{facets 4-cycle}.
    We can thus conclude that under the generic uniform-coupling assumption,
    the algebraic Kuramoto equations~\eqref{equ:kuramoto-alg} for 4-cycle graph
    has exactly 6 complex solutions
    even though its BKK bound is 12.
\end{example}

Combining the above corollaries, \Cref{thm:facet-sum},
and the total number facets of $\nabla_{C_N}$,
we get the following generic root count for~\eqref{equ:kuramoto-alg}
under the assumption of generic natural frequency and 
generic but uniform coupling coefficients.

\begin{theorem}
    Given a cycle network of $N$ oscillators with uniform coupling %coefficients
    and generically chosen complex constants $a,\omega_i,\dots,\omega_n$,
    the number of isolated complex solutions to the system~\eqref{equ:kuramoto-alg} is
    \[
        \left\{
        \begin{aligned}
            &N \binom{N-1}{ \lfloor (N-1) / 2 \rfloor } && \text{if $N$ is not divisible by 4} \\[2ex]
            &(N-2) \binom{N-1}{N/2-1} && \text{if $N$ is divisible by 4}.
        \end{aligned}
        \right.
    \]
\end{theorem}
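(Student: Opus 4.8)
The plan is to assemble the final count by combining \Cref{thm:facet-sum} with the per-facet root counts recorded in the three preceding corollaries and the known enumeration of the facets of $\nabla_{C_N}$. By \Cref{thm:facet-sum}, for generic $\boldc$ and $a$ the total number of isolated complex solutions of~\eqref{equ:kuramoto-alg} equals $\sum_{F \in \mathcal{F}(\nabla_{C_N})} \#\{\text{zeros of } \boldf_F^R \text{ in } (\C^*)^n\}$. The key simplification is that \Cref{prop.div.4} and its corollaries show this per-facet count depends only on the residue of $N$ modulo $4$ and is the \emph{same} for every facet. Hence the sum collapses to the product of the number of facets of $\nabla_{C_N}$ and this common per-facet value, and the proof reduces to multiplying two already-established quantities and simplifying. (Since $R$ is a fixed generic nonsingular matrix, generic $\boldomega$ yields generic $\boldc = R\,\boldomega$, so the corollaries apply verbatim.)

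First I would treat the case of odd $N$. Here $\nabla_{C_N}$ has $N\binom{N-1}{(N-1)/2}$ facets, and the corollary for odd $N$ gives a unique $\C^*$-solution per facet. Since $\lfloor (N-1)/2 \rfloor = (N-1)/2$ for odd $N$, the product is exactly $N\binom{N-1}{\lfloor (N-1)/2 \rfloor}$, matching the claimed value and confirming that the uniform-coupling count coincides with the non-uniform bound in this case.

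For even $N$, the polytope has $\binom{N}{N/2}$ facets, and $\lfloor (N-1)/2 \rfloor = N/2 - 1$. When $N$ is even but not divisible by $4$, each facet subsystem contributes $\tfrac{N}{2}$ solutions, giving a total of $\tfrac{N}{2}\binom{N}{N/2}$; when $N$ is divisible by $4$, \Cref{cor: number of subsystem solutions div. by 4} gives $\tfrac{N}{2}-1$ per facet, for a total of $(\tfrac{N}{2}-1)\binom{N}{N/2}$. Both totals are then rewritten using the elementary identity $\binom{N}{N/2} = 2\binom{N-1}{N/2-1}$, which yields $N\binom{N-1}{N/2-1}$ and $(N-2)\binom{N-1}{N/2-1}$ respectively, exactly the two formulas in the statement.

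The substantive mathematics has already been carried out in \Cref{prop.div.4} and the corollaries, so this final step is essentially bookkeeping; the only point requiring a touch of care is the uniform handling of the floor function together with the identity $\binom{N}{N/2}=2\binom{N-1}{N/2-1}$, which is what lets the three cases be packaged into the two displayed expressions. I do not anticipate any genuine obstacle here, beyond verifying that the facet enumeration and the per-facet counts are matched to the correct parity classes.
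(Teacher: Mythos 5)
Your proposal is correct and follows the paper's own route exactly: it invokes \Cref{thm:facet-sum} to reduce the count to a sum over facets, uses the three corollaries for the common per-facet count in each parity class, and multiplies by the known facet counts ($N\binom{N-1}{(N-1)/2}$ for odd $N$, $\binom{N}{N/2}$ for even $N$), with the identity $\binom{N}{N/2}=2\binom{N-1}{N/2-1}$ handling the final simplification. The paper compresses this into one sentence, so your write-up is simply a more explicit version of the same bookkeeping, including the worthwhile observation that generic $\boldomega$ gives generic $\boldc = R\,\boldomega$.
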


These are also the answers to~\Cref{prb:two}, i.e.,
they are the birationally invariant intersection indices 
$[L_1,\dots,L_n]$ for the family of vectors spaces of rational functions
over the toric variety $(\C^*)^n$.
Note that this intersection index is strictly less than the BKK bound
when $N$ is divisible by 4.

\begin{remark}
    In the cases where $N$ is divisible by 4,
    the gap between the birationally invariant intersection index
    and the BKK bound is $2 \binom{N-1}{N/2-1} = \binom{N}{N/2}$, 
    which grows exponentially as $N \to \infty$.
\end{remark}

% \section{Homotopy method for solving Kuramoto equations with uniform coupling}
% \label{sec: homotopy}
\section{Concluding remarks}\label{sec: conclusion}

The Kuramoto model is one of the most widely studied models
for %understanding
describing the pervasive phenomenon of spontaneous synchronization in networks of coupled oscillators. 
%As previously studied,
In this model, frequency synchronization configurations can be formulated as complex solutions to a system of algebraic equations.
Under the assumption of generic natural frequencies and generic non-uniform coupling strength,
the upper bounds to the number of frequency synchronization configurations 
in cycle networks of $N$ oscillators were computed in a recent work~\cite{ChenDavisMehta2018Counting}.
This paper provides a sharper upper bound for the special cases of networks with uniform coupling.
The uniform coupling assumption 
%(i.e., the strength of the coupling between any two oscillators are identical)
imposes an algebraic condition on the coefficients of the algebraic Kuramoto equations
and %can potentially lower
potentially reduces the maximum number of solutions.
We have established the exact condition under which the maximum root count %can be 
is lower and quantified the gap. 
% {\color{purple}between the bounds, which grows exponentially as $N \to \infty$. [I'd stop the conclusion here.]}
In particular, if $N$ is not divisible by 4,
then the maximum complex root count of the Kuramoto equations remains the same
with or without the uniform coupling assumption.
On the other hand, if $N$ is divisible by 4,
the uniform coupling assumption significantly lowers the maximum root count.
Indeed, the gap between the bounds is $\binom{N}{N/2}$
which grows exponentially as $N \to \infty$.

\bibliographystyle{abbrv}
\bibliography{library.bib}

\end{document}